\documentclass[11pt,leqno]{article}
\usepackage[margin=1in]{geometry} 
\usepackage{amssymb,amsfonts,amsmath,bbm,mathrsfs,stmaryrd,mathtools}
\usepackage{xcolor}
\usepackage{url}

\usepackage{graphicx}

\usepackage{extarrows}

\usepackage[shortlabels]{enumitem}
\usepackage{tensor}

\usepackage{xr}
\usepackage[T1]{fontenc}

\usepackage[colorlinks,
linkcolor=black!75!red,
citecolor=blue,
pdftitle={},
pdfproducer={pdfLaTeX},
pdfpagemode=None,
bookmarksopen=true,
bookmarksnumbered=true,
backref=page]{hyperref}

\usepackage{tikz}
\usetikzlibrary{arrows,calc,decorations.pathreplacing,decorations.markings,decorations.shapes,intersections,shapes.geometric,through,fit,shapes.symbols,positioning,decorations.pathmorphing}

\usepackage{braket}

\usepackage[amsmath,thmmarks,hyperref]{ntheorem}
\usepackage{cleveref}

\newcommand\nthalias[1]{\AddToHook{env/#1/begin}{\crefalias{lemma}{#1}}}

\nthalias{definition}
\nthalias{example}
\nthalias{examples}
\nthalias{remark}
\nthalias{remarks}
\nthalias{convention}
\nthalias{notation}
\nthalias{construction}
\nthalias{sketch}
\nthalias{theoremN}
\nthalias{propositionN}
\nthalias{corollaryN}
\nthalias{lemma}
\nthalias{proposition}
\nthalias{corollary}
\nthalias{theorem}
\nthalias{conjecture}
\nthalias{question}
\nthalias{assumption}
\nthalias{recollection}

\creflabelformat{enumi}{#2#1#3}

\crefname{section}{Section}{Sections}
\crefformat{section}{#2Section~#1#3} 
\Crefformat{section}{#2Section~#1#3} 

\crefname{subsection}{\S}{\S\S}
\AtBeginDocument{%
  \crefformat{subsection}{#2\S#1#3}%
  \Crefformat{subsection}{#2\S#1#3}% 
}

\crefname{subsubsection}{\S}{\S\S}
\AtBeginDocument{%
  \crefformat{subsubsection}{#2\S#1#3}%
  \Crefformat{subsubsection}{#2\S#1#3}% 
}

\theoremstyle{plain}

\newtheorem{lemma}{Lemma}[section]
\newtheorem{proposition}[lemma]{Proposition}
\newtheorem{corollary}[lemma]{Corollary}
\newtheorem{theorem}[lemma]{Theorem}

\theoremstyle{plain}
\theoremnumbering{Alph}

\theoremstyle{plain}
\theorembodyfont{\upshape}
\theoremsymbol{\ensuremath{\blacklozenge}}

\newtheorem{definition}[lemma]{Definition}
\newtheorem{example}[lemma]{Example}

\crefname{definition}{definition}{definitions}
\crefformat{definition}{#2definition~#1#3} 
\Crefformat{definition}{#2Definition~#1#3} 

\crefname{ex}{example}{examples}
\crefformat{example}{#2example~#1#3} 
\Crefformat{example}{#2Example~#1#3} 

\crefname{exs}{example}{examples}
\crefformat{examples}{#2example~#1#3} 
\Crefformat{examples}{#2Example~#1#3} 

\crefname{remark}{remark}{remarks}
\crefformat{remark}{#2remark~#1#3} 
\Crefformat{remark}{#2Remark~#1#3} 

\crefname{remarks}{remark}{remarks}
\crefformat{remarks}{#2remark~#1#3} 
\Crefformat{remarks}{#2Remark~#1#3} 

\crefname{convention}{convention}{conventions}
\crefformat{convention}{#2convention~#1#3} 
\Crefformat{convention}{#2Convention~#1#3} 

\crefname{notation}{notation}{notations}
\crefformat{notation}{#2notation~#1#3} 
\Crefformat{notation}{#2Notation~#1#3} 

\crefname{table}{table}{tables}
\crefformat{table}{#2table~#1#3} 
\Crefformat{table}{#2Table~#1#3}

\crefname{lemma}{lemma}{lemmas}
\crefformat{lemma}{#2lemma~#1#3} 
\Crefformat{lemma}{#2Lemma~#1#3} 

\crefname{proposition}{proposition}{propositions}
\crefformat{proposition}{#2proposition~#1#3} 
\Crefformat{proposition}{#2Proposition~#1#3} 

\crefname{propositionN}{proposition}{propositions}
\crefformat{propositionN}{#2proposition~#1#3} 
\Crefformat{propositionN}{#2Proposition~#1#3} 

\crefname{corollary}{corollary}{corollaries}
\crefformat{corollary}{#2corollary~#1#3} 
\Crefformat{corollary}{#2Corollary~#1#3} 

\crefname{corollaryN}{corollary}{corollaries}
\crefformat{corollaryN}{#2corollary~#1#3} 
\Crefformat{corollaryN}{#2Corollary~#1#3} 

\crefname{theorem}{theorem}{theorems}
\crefformat{theorem}{#2theorem~#1#3} 
\Crefformat{theorem}{#2Theorem~#1#3} 

\crefname{theoremN}{theorem}{theorems}
\crefformat{theoremN}{#2theorem~#1#3} 
\Crefformat{theoremN}{#2Theorem~#1#3} 

\crefname{enumi}{}{}
\crefformat{enumi}{#2#1#3}
\Crefformat{enumi}{#2#1#3}

\crefname{assumption}{assumption}{Assumptions}
\crefformat{assumption}{#2assumption~#1#3} 
\Crefformat{assumption}{#2Assumption~#1#3} 

\crefname{construction}{construction}{Constructions}
\crefformat{construction}{#2construction~#1#3} 
\Crefformat{construction}{#2Construction~#1#3} 

\crefname{sketch}{sketch}{Sketches}
\crefformat{sketch}{#2sketch~#1#3} 
\Crefformat{sketch}{#2Sketch~#1#3} 

\crefname{recollection}{recollection}{Recollectiones}
\crefformat{recollection}{#2recollection~#1#3} 
\Crefformat{recollection}{#2Recollection~#1#3} 

\crefname{question}{question}{Questions}
\crefformat{question}{#2question~#1#3} 
\Crefformat{question}{#2Question~#1#3} 

\crefname{equation}{}{}
\crefformat{equation}{(#2#1#3)} 
\Crefformat{equation}{(#2#1#3)}

\numberwithin{equation}{section}
\renewcommand{\theequation}{\thesection-\arabic{equation}}

\theoremstyle{nonumberplain}
\theoremsymbol{\ensuremath{\blacksquare}}

\newtheorem{proof}{Proof}
\newcommand\pf[1]{\newtheorem{#1}{Proof of \Cref{#1}}}

\newcommand\bC{{\mathbb C}}

\newcommand\bG{{\mathbb G}}

\newcommand\bO{{\mathbb O}}

\newcommand\bU{{\mathbb U}}

\newcommand\bZ{{\mathbb Z}}

\newcommand\cC{{\mathcal C}}

\newcommand\cF{{\mathcal F}}
\newcommand\cG{{\mathcal G}}

\newcommand\cK{{\mathcal K}}
\newcommand\cL{{\mathcal L}}

\newcommand\cO{{\mathcal O}}
\newcommand\cP{{\mathcal P}}

\newcommand\cS{{\mathcal S}}

\newcommand\1{{\bf 1}}

\DeclareMathOperator{\Irr}{Irr}
\DeclareMathOperator{\id}{id}

\DeclareMathOperator{\im}{\mathrm{im}}

\DeclareMathOperator{\spn}{\mathrm{span}}
\DeclareMathOperator{\spec}{\mathrm{spec}}

\DeclareMathOperator{\supp}{\mathrm{supp}}

\DeclareMathOperator{\Tr}{\mathrm{Tr}}

\newcommand\numberthis{\addtocounter{equation}{1}\tag{\theequation}}

\newcommand{\cat}[1]{\textsc{#1}}

\newcommand{\qedhere}{\mbox{}\hfill\ensuremath{\blacksquare}}

\newcommand{\comment}[1]{}

\newcommand{\xrightarrowdbl}[2][]{%
  \xrightarrow[#1]{#2}\mathrel{\mkern-14mu}\rightarrow
}

\title{Spectral finiteness, quantum norm continuity and classical points}
\author{Alexandru Chirvasitu}

\begin{document}

\date{}

\newcommand{\Addresses}{{% additional braces for segregating \footnotesize
  \bigskip
  \footnotesize

  \textsc{Department of Mathematics, University at Buffalo}
  \par\nopagebreak
  \textsc{Buffalo, NY 14260-2900, USA}  
  \par\nopagebreak
  \textit{E-mail address}: \texttt{achirvas@buffalo.edu}

}}

\maketitle

\begin{abstract}
  We prove various notions of uniform continuity for compact-quantum-group representations on Hilbert or Banach spaces equivalent to having finite spectrum, i.e. finitely many isotypic components. This generalizes the classical analogue for compact-group representations on Banach spaces, and relies in part on Riemann-Lebesgue-type decay properties for Fourier coefficients of elements in minimal tensor products with compact-quantum-group function algebras. 
\end{abstract}

\noindent \emph{Key words:
  Riemann-Lebesgue;
  compact quantum group;
  conditional expectation;
  convolution product;
  injective tensor product;
  irreducible representation;
  norm-continuous representation;
  spectrum
}

\vspace{.5cm}

\noindent{MSC 2020: 46L67; 22D10; 16T05; 16T15; 47B01; 43A32; 46L85; 47L65
  % 46L67 Quantum groups (operator algebraic aspects)
  % 22D10 Unitary representations of locally compact groups
  % 16T05 Hopf algebras and their applications
  % 16T15 Coalgebras and comodules; corings
  % 47B01 Operators on Banach spaces
  % 43A32 Other transforms and operators of Fourier type
  % 46L85 Noncommutative topology
  % 47L65 Crossed product algebras (analytic crossed products)
}

%\tableofcontents

%%%%%%%%%%%%%%%%%%%%%%%%%%%%%%%%
%%%%%%%%%%%%%%%%%%%%%%%%%%%%%%%%
\section*{Introduction}

The aim in the present note is to compare and relate two aspects of compact-quantum group unitary representations:
\begin{itemize}[wide]
\item on the one hand analytic: continuity/decay properties mimicking, noncommutative-geometrically, classical norm-continuity for a unitary representation $\bG\to \bU(H)$ of a compact group $\bG$ on a Hilbert space $H$;
\item on the other, representation-theoretic: the representation's having finite \emph{spectrum}, i.e. finitely many \emph{isotypic components}. 
\end{itemize}
Classically, the two properties (norm continuity and spectral finiteness) are indeed equivalent: \cite[p.257]{klm_unif} addresses this for connected second-countable groups, \cite[Corollary 2]{zbMATH05628052} in the broader setup of arbitrary compact groups represented norm-continuously on \emph{Banach} spaces, and \cite[Theorem 3.10]{Chirvasitu2026JNCG604} provides a number of equivalent characterizations.

The \emph{compact quantum groups} $\bG$ to which the above uniformity/spectral finiteness discussion is to be extended are here as in \cite[Definition 1.1.1]{NeTu13}: unital $C^*$-algebras $\cC(\bG)$ (thought of as continuous-function algebras on the otherwise-nonexistent $\bG$) equipped with a coassociative unital $C^*$ morphism
\begin{equation*}
  \cC(\bG)
  \xrightarrow{\quad\Delta\quad}
  \cC(\bG)\underline{\otimes}\cC(\bG)
  \quad
  \left(\text{spatial $C^*$ tensor product}\right)
\end{equation*}
with (the products' linear spans) $\left(\cC(\bG)\otimes 1\right)\Delta\cC(\bG)$ and $\left(1\otimes \cC(\bG)\right)\Delta\cC(\bG)$ both dense in $\cC(\bG)^{\underline{\otimes}2}$. 

We recall briefly in \Cref{se:qunif} below how unitary $\bG$-representations on Hilbert spaces $H$ are typically cast as unitary elements $U\in M(\cC(\bG)\underline{\otimes}\cK(H))$: multiplier algebra of the displayed minimal $C^*$ tensor product between the quantum function algebra $\cC(\bG)$ and the algebra of compact operators on $H$. That taken for granted, the quantization of the (uniformity=spectral finiteness) principle reads:

\begin{theorem}\label{th:fin.spec.equiv}
  The following conditions on a unitary $\bG$-representation $U\in M(\cC(\bG)\underline{\otimes}\cK(H))$ on a Hilbert space $H$ are equivalent.
  \begin{enumerate}[(a),wide]
  \item\label{item:th:fin.spec.equiv:fin.spc} $U$ has finite spectrum.
  \item\label{item:th:fin.spec.equiv:adj} The continuous adjoint action
    \begin{equation*}
      \cK(H)
      \xrightarrow{\quad U^*(1\otimes -)U\quad}
      \cC(\bG)\underline{\otimes}\cK(H)
    \end{equation*}
    extends to a continuous action on $\cL(H)$.

  \item\label{item:th:fin.spec.equiv:unif} $U$ is norm-continuous, in the sense that $U\in \cC(\bG)\underline{\otimes}\cL(H)$. 
  \end{enumerate}
\end{theorem}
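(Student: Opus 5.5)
The plan is to prove the cycle \ref{item:th:fin.spec.equiv:fin.spc} $\Rightarrow$ \ref{item:th:fin.spec.equiv:unif} $\Rightarrow$ \ref{item:th:fin.spec.equiv:adj} $\Rightarrow$ \ref{item:th:fin.spec.equiv:fin.spc}.

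\textbf{(a) $\Rightarrow$ (c).} Decompose $H=\bigoplus_{\alpha\in F}H_\alpha$ into isotypic components, with $F\subseteq\Irr(\bG)$ finite. Each $H_\alpha\cong H_\alpha'\otimes K_\alpha$, where $H_\alpha'$ is the finite-dimensional carrier of $\alpha$ and $K_\alpha$ is a multiplicity space on which $\bG$ acts trivially. Then $U=\sum_{\alpha\in F}U^\alpha\otimes\id_{K_\alpha}$. Each summand lies in $\cC(\bG)\otimes M_{n_\alpha}\otimes\cL(K_\alpha)\subseteq\cC(\bG)\underline{\otimes}\cL(H)$, because the matrix coefficients $u^\alpha_{ij}$ lie in $\cC(\bG)$. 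The sum is finite, so $U$ itself lies in $\cC(\bG)\underline{\otimes}\cL(H)$.

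\textbf{(c) $\Rightarrow$ (b).} If $U\in\cC(\bG)\underline{\otimes}\cL(H)$, then $T\mapsto U^*(1\otimes T)U$ maps $\cL(H)$ into $\cC(\bG)\underline{\otimes}\cL(H)$ and is norm-continuous. It restricts to the given action on $\cK(H)$. The coaction identity $(\Delta\otimes\id)\circ\mathrm{Ad}=(\id\otimes\mathrm{Ad})\circ\mathrm{Ad}$ follows from $(\Delta\otimes\id)U=U_{13}U_{23}$. The density (Podleś) condition also needs checking. I would obtain it from the fact that the algebraic core, namely elements with finitely many spectral components, is dense. Alternatively, I would simply note that (c) already yields (a) by the argument below, and then (a) gives Podleś density directly via the isotypic decomposition.

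\textbf{(b) $\Rightarrow$ (a).} This is the main obstacle. Let $\rho$ be the continuous action on $\cL(H)$ extending the adjoint action. Use the Haar state $h$ and, for each $\alpha\in\Irr(\bG)$, the spectral projection
\[
E_\alpha=\bigl(\varphi_\alpha\otimes\id\bigr)\circ\rho,
\]
where $\varphi_\alpha=d_\alpha\,h\bigl((u^\alpha_{ii})^*\,\cdot\,\bigr)$, or the appropriate twisted character in the non-Kac case. These maps are normal on $\cK(H)$. The key claim is that the spectral subspaces of a continuous action on a $C^*$-algebra span a dense subspace. Take the identity: $\rho(1)=1\otimes1$. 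Now suppose the spectrum is infinite, and pick one vector $\xi_n$ in each of infinitely many distinct isotypic components $H_{\alpha_n}$. Define
\[
T=\sum_n |\xi_{n+1}\rangle\langle\xi_n| ,
\]
a partial isometry shifting between infinitely many spectrally distinct pieces. Its spectral components $E_\beta(T)$ are governed by which $\beta$ occur in $\alpha_{n+1}\otimes\bar\alpha_n$. By choosing the sequence sparsely, I can arrange that infinitely many distinct $\beta$ each carry a component of norm bounded below, since these components are block pieces of the partial isometry. Meanwhile continuity of $\rho(T)\in\cC(\bG)\underline{\otimes}\cL(H)$ forces a Riemann--Lebesgue decay: approximating $\rho(T)$ by finite sums $\sum a_i\otimes T_i$ with $a_i$ in the Peter--Weyl algebra shows $\|E_\beta(T)\|\to0$ outside finite sets of $\beta$. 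These two facts contradict each other.

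The delicate points are the following.
\begin{enumerate}[(i)]
\item Identifying the normal extension of $E_\beta$ to $\cL(H)$ with $(\varphi_\beta\otimes\id)\rho$. This holds because $\rho$ agrees with strict or $\sigma$-weak extensions on $\cK(H)$.
\item Controlling the norms $\|(\varphi_\beta\otimes\id)\|$, which grow like $d_\beta$ times the quantum dimension. For this reason I would instead use the conditional-expectation formulation. For a fixed finite-rank projection $P$ onto a piece, the cut-down $P\rho(T)P$-type coefficients have uniform bounds.
\end{enumerate}
I expect to lean on the Riemann--Lebesgue-type decay for elements of $\cC(\bG)\underline{\otimes}X$ announced in the abstract.
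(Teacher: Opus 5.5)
\textbf{There is a genuine gap in (b)$\Rightarrow$(a).} Your (a)$\Rightarrow$(c) and (c)$\Rightarrow$(b) are fine, up to the density remark. The gap is in the step carrying all the weight, (b)$\Rightarrow$(a), which you also want to reuse for (c)$\Rightarrow$(a): the ``Riemann--Lebesgue decay'' claim.

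\emph{The approximation does not give decay.} Approximating $\rho(T)$ within $\varepsilon$ by $\sum a_i\otimes T_i$ with $a_i\in\cO(\bG)$ only yields $\|E_\beta(T)\|\le\|\varphi_\beta\|\,\varepsilon$ for $\beta$ off the support of the $a_i$. Since $\|\varphi_\beta\|$ is unbounded in $\beta$, no decay follows.

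\emph{The claim itself is false.} The statement ``$\rho(T)\in\cC(\bG)\underline{\otimes}\cL(H)$ implies $\|E_\beta(T)\|\to0$'' already fails classically. Take $G=SU(2)$ acting on $L^2(G)$ by the left regular representation $\lambda$, and $T=M_f$ with $f\in C(G)$. The orbit $g\mapsto\lambda_gM_f\lambda_g^*=M_{\lambda_gf}$ is norm-continuous. However, $E_\beta(M_f)=M_{P_\beta f}$, where $P_\beta$ is convolution by $d_\beta\overline{\chi_\beta}$; as an operator on $C(G)$ it has norm $d_\beta\|\chi_\beta\|_{L^1}\to\infty$, since $\|\chi_\beta\|_{L^1}\to 8/\pi^2$. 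By uniform boundedness some $f$ has $\sup_\beta\|E_\beta(M_f)\|=\infty$.

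\emph{The lower bound is also unsupported.} You claim infinitely many $\beta$ with $\|E_\beta(T)\|$ bounded below. But the $\beta$-components of the rank-one block $|\xi_{n+1}\rangle\langle\xi_n|$ inside $\alpha_{n+1}\otimes\bar\alpha_n$ can all be small in operator norm when that tensor product has many summands.

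\textbf{Item (ii) does not repair this, and the shift is the wrong test element.} The norm-convergent statement actually available is \Cref{pr:str2nrm}: $E(xx^*)=\sum_{\alpha,i,j}\frac{F_{\alpha jj}}{q_\alpha}x^\alpha_{ij}x^{\alpha*}_{ij}$ in norm. It is obtained by upgrading pointwise convergence on the state space to uniform convergence via Dini's theorem for monotone nets. It controls weighted square sums, not $\|E_\beta(T)\|=\|\sum_i x^\beta_{ii}\|$. Applied to your shift $x=\rho(T)$ it gives no contradiction. In the Kac case, with one copy of each $\alpha_n$ chosen sparsely, the terms indexed by $\beta\subset\alpha_{n+1}\otimes\bar\alpha_n$ sum to $d_{\alpha_{n+1}}^{-1}$ times the identity of $V_{\alpha_{n+1}}$. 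So the tails have norm $\sup_{n>N}d_{\alpha_{n+1}}^{-1}$, which tends to $0$ as soon as the dimensions grow.

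\textbf{The missing idea: apply the identity to $x=U$ itself.} The Fourier coefficients of $U$ are matrix units, so $\sum_iU^\alpha_{ij}U^{\alpha*}_{ij}=P^\alpha$ for every $j$, and $\sum_jF_{\alpha jj}/q_\alpha=1$. Hence $1=E(UU^*)=\sum_\alpha P^\alpha$ in norm. A norm-convergent sum of mutually orthogonal projections has only finitely many nonzero terms. This proves (c)$\Rightarrow$(a) directly.

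\textbf{Consequences for your cycle.} The paper does not obtain (a)$\Leftrightarrow$(b) from these tools; it quotes that equivalence from the author's earlier work. Condition (b) does not evidently place $U$ in $\cC(\bG)\underline{\otimes}\cL(H)$, so routing your cycle through (b) needs a genuinely different argument there. Your fallback for Podle\'s density in (c)$\Rightarrow$(b) relies on the same flawed argument, so it inherits the gap.
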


There are alternative possible notions of norm continuity for compact-quantum-group representations, beyond those featuring in \Cref{th:fin.spec.equiv}. \cite[Definition 3.6(3)]{Chirvasitu2026JNCG604}, for instance, expands the scope of the discussion to representations on \emph{Banach} (as opposed to Hilbert) spaces $E$ in the sense of \cite[Definition 3.1]{Chirvasitu2026JNCG604}: coassociative maps $E\xrightarrow{\rho}\cC(\bG)\otimes_{\varepsilon} E$ into the \emph{injective} \cite[\S 3.1]{ryan_ban} Banach-space tensor product satisfying the customary density condition (see \Cref{eq:coact.e} below). In that Banach-space context, said norm (or \emph{uniform}) continuity is defined via \cite[Lemma 3.7]{Chirvasitu2026JNCG604} as
\begin{equation}\label{eq:ban.rpr.nrm}
  \cC(\bG)^*
  \ni
  \psi
  \xmapsto[\quad\text{weak$^*$-to-norm continuous}\quad]{\quad}
  \left(v\xmapsto{\quad}(\psi\otimes\id)\rho v\right)
  \in \cL(E).
\end{equation}
%We will also frequently refer to the property as \emph{uniformity}, for short.
Spectral machinery extends to Banach-space representations, and one direct quantum counterpart to the classical coincidence of uniformity with spectrum finiteness is

\begin{theorem}\label{th:ban.unif.cqg.cls.pt}
  For a compact quantum group $\bG$ represented via $E\xrightarrow{\rho}\cC(\bG)\otimes_{\varepsilon} E$ on a Banach space $E$ the following are equivalent.
  \begin{enumerate}[(a),wide]
  \item\label{item:th:ban.unif.cqg.cls.pt:fin.sp} $\spec \rho$ is finite.
  \item\label{item:th:ban.unif.cqg.cls.pt:unif} $\rho$ is norm-continuous in the sense that \Cref{eq:ban.rpr.nrm} holds.  \qedhere
  \end{enumerate}
\end{theorem}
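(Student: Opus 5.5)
The implication (a)$\Rightarrow$(b) should be formal. If $\spec\rho=\{\alpha_1,\dots,\alpha_n\}$ is finite, I will use the isotypic decomposition $E=\bigoplus_i E_{\alpha_i}$ given by the spectral projections $P_\alpha=(h_\alpha\otimes\id)\rho$. Here $h_\alpha$ is the functional obtained by pairing against the character of $\alpha$ twisted by the modular data, and the $P_\alpha$ are bounded idempotents. On each $E_{\alpha}$ the coaction factors through the finite-dimensional coefficient coalgebra $\cC_\alpha\subset\cC(\bG)$ spanned by the matrix coefficients $u^\alpha_{ij}$. Concretely, for $v\in E_\alpha$ we have $\rho v=\sum_{ij}u^\alpha_{ij}\otimes T^\alpha_{ij}v$ with bounded $T^\alpha_{ij}\in\cL(E)$, obtained by slicing against the dual basis. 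Then
\begin{equation*}
  \psi\mapsto(\psi\otimes\id)\rho=\sum_{i}\sum_{j,k}\psi(u^{\alpha_i}_{jk})\,T^{\alpha_i}_{jk}P_{\alpha_i}
\end{equation*}
is a finite sum of weak$^*$-continuous scalar functions of $\psi$ times fixed operators. It is therefore weak$^*$-to-norm continuous, and \Cref{eq:ban.rpr.nrm} holds.

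For (b)$\Rightarrow$(a) I will argue by contradiction, using a Riemann--Lebesgue-type decay, as the abstract announces. Suppose $\spec\rho$ is infinite, and pick distinct $\alpha_1,\alpha_2,\dots$ in it with nonzero vectors $v_k\in E_{\alpha_k}$. The plan is to construct a bounded net (or sequence) of functionals $\psi_k\in\cC(\bG)^*$ with $\psi_k\to 0$ weak$^*$ but $\|(\psi_k\otimes\id)\rho\|\not\to0$. The natural choice is $\psi_k=h(x_k^*\,\cdot)$, with $x_k$ a suitably normalized matrix coefficient of $\alpha_k$, or a normalized character-type element, and $h$ the Haar state. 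Such functionals have norm controlled uniformly, e.g.\ via $\|h(x^*\cdot)\|\le \|x\|_{L^1}$-type bounds after normalizing $x_k$ by its $L^2(h)$ norm times dimension. By choosing $x_k$ so that $(\psi_k\otimes\id)\rho$ acts on $E_{\alpha_k}$ as a nonzero projection-like operator of norm bounded below, the norm of $(\psi_k\otimes\id)\rho$ stays bounded away from $0$. It remains to show $\psi_k\to0$ weak$^*$. On the dense Hopf algebra $\cO(\bG)$ this is immediate, since $h(x_k^*y)=0$ for $y$ supported on finitely many other types by orthogonality. Uniform boundedness of $\|\psi_k\|$ then upgrades this to weak$^*$ convergence on all of $\cC(\bG)$.

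The main obstacle is simultaneously normalizing so that $\sup_k\|\psi_k\|<\infty$ (a norm in $\cC(\bG)^*$, i.e.\ an $L^1$-type norm) while keeping $\|(\psi_k\otimes\id)\rho|_{E_{\alpha_k}}\|$ bounded below. In the non-Kac case the modular operators $F_\alpha$ may be unbounded across $\alpha$, which makes the balance delicate. My first attempt will use the spectral projections $P_{\alpha_k}$ themselves. By uniform boundedness, (b) forces $\{(\psi\otimes\id)\rho:\|\psi\|\le1\}$ to be norm-compact, being the image of a weak$^*$-compact ball. I will then try to show that an infinite family of mutually annihilating nonzero idempotents of the form $(\psi_k\otimes\id)\rho$, each restricted to a rank-one matrix unit of $\alpha_k$ with $\|\psi_k\|\le 1$ after rescaling by $\|v_k\|$, contradicts this compactness. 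Failing that, I would reduce to the Hilbert case via \Cref{th:fin.spec.equiv}, by passing to the finite-dimensional pieces $E_{\alpha_k}$, renormed to be unitary, and arguing through the regular representation.
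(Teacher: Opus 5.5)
Your argument for (a)$\Rightarrow$(b) is fine. The gap is in (b)$\Rightarrow$(a).

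Everything you propose there uses continuity only on \emph{bounded} subsets of $\cC(\bG)^*$. This covers both the bounded weak$^*$-null sequence $(\psi_k)$ with $\|(\psi_k\otimes\id)\rho\|$ bounded below and the norm-compactness of the image of the unit ball. In other words, you only use the uniformity$_{\le 1}$ of \Cref{def:unif.bdd}, and that weaker property does not force finite spectrum. By \Cref{th:rd.exp.dim} and \Cref{ex:oplus.uplus}, the direct sum $\bigoplus_k\alpha_k$ of the irreducibles of $\bO^+(n)$, $n\ge 3$, is uniform$_{\le 1}$ with infinite spectrum. Here $\bO^+(n)$ is of Kac type and is taken with $\cC(\bG)=\cC_r(\bG)$, the setting of the RD estimate. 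In that example $\|\sum_{i,j}\overline{v_i}w_ju^{\alpha_k}_{ij}\|\le P(k)(\dim\alpha_k)^{-1/2}\to 0$, uniformly in unit vectors $v,w$. So any $\psi_k$ making $(\psi_k\otimes\id)\rho$ bounded below on $E_{\alpha_k}$ must satisfy $\|\psi_k\|\to\infty$. After rescaling into the unit ball, your mutually annihilating idempotents become $c_kQ_k$ with $c_k\to 0$, which is entirely compatible with compactness. So the ``main obstacle'' you flag is not a delicate normalization issue caused by non-trivial $F_\alpha$ (here $F_\alpha=1$); it cannot be overcome in general. The fallback via \Cref{th:fin.spec.equiv} does not help either. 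Renorming each $E_{\alpha_k}$ separately yields no equivalent Hilbert norm on $E$, and nothing in your sketch would produce $U\in\cC(\bG)\underline{\otimes}\cL(H)$.

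The missing idea is that (b) asks for continuity with respect to the full, unbounded weak$^*$ topology of $\cC(\bG)^*$. For a linear map this is drastically restrictive. Suppose $T:\psi\mapsto(\psi\otimes\id)\rho$ is weak$^*$-to-norm continuous. Then it sends some basic neighborhood $\{\psi:\ |\psi(x_i)|<\delta,\ 1\le i\le n\}$, with $x_i\in\cC(\bG)$, into the unit ball of $\cL(E)$. That neighborhood contains the subspace $K:=\{\psi:\ \psi(x_1)=\cdots=\psi(x_n)=0\}$, so the linear subspace $T(K)$ lies inside a ball and must be $\{0\}$. Since $K$ has codimension at most $n$, this gives $\dim\im T\le n$. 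But $\im T$ contains the spectral idempotents $P^{\alpha}=(\psi^{\alpha}\otimes\id)\rho$. These satisfy $P^{\alpha}P^{\beta}=\delta_{\alpha\beta}P^{\alpha}$, so the non-zero ones, namely those with $\alpha\in\spec\rho$, are linearly independent. Hence $|\spec\rho|\le n$. This is the paper's argument. No Riemann--Lebesgue-type decay enters here; that machinery is what drives \Cref{th:fin.spec.equiv}.
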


This reverses the implication recorded as \cite[Lemma 3.8]{Chirvasitu2026JNCG604}, but note that the hypothesis is arguably somewhat excessive: classically, one need not require the full power of \Cref{eq:ban.rpr.nrm}, for in that case norm uniformity translates directly to the map in question being continuous when restricted to the space of \emph{pure states} of $\cC(\bG)$ (i.e. $\bG$ itself, with points identified with their respective Dirac-delta measures). This latter remark too has a quantum analogue, in the form of \Cref{th:if.lg.uaij} below. To make sense of the statement:
\begin{itemize}[wide]
\item we reference the matrix coefficients $u^{\alpha}_{ij}\in \cC(\bG)$ (see \Cref{se:qunif}'s brief recollection) with $\alpha$ ranging over irreducible $\bG$-representations $\Irr(\bG)$;
\item and will refer to a unitary $\bG$-representation $U$ as \emph{uniform}$_{\le 1}$ (\Cref{def:unif.bdd}) provided \Cref{eq:ban.rpr.nrm}'s map is assumed continuous only on the unit ball. 
\end{itemize}

\begin{theorem}\label{th:if.lg.uaij}
  Let $U\in M(\cC(\bG)\underline{\otimes}\cK(H))$ be a unitary representation of a compact quantum group $\bG$.
  \begin{enumerate}[(1),wide]
  \item\label{item:th:if.lg.uaij:temp.dec} If the \emph{Pontryagin dual} \cite[Theorem 3.3.2]{tim} $\Gamma:=\widehat{\bG}$ \emph{has tempered decay} in the sense that
    \begin{equation*}
      \begin{gathered}
        \exists\left(C>0\right)
        \forall\left(\alpha\in \Irr(\bG)\right)
        \left(\left\|u^{\alpha}\right\|_{\infty}^{\cC_r(\bG)}> C\right)\\
        \left\|u^{\alpha}\right\|_{\infty}^{\cC(\bG)}
        :=
        \sup_{\substack{v,w\in \bC^{\dim\alpha}\\\|v\|,\|w\|\le 1}}
        \left\|
          \Braket{v\mid u^{\alpha}\mid w}
          :=
          \sum_{i,j=1}^{\dim\alpha}u^{\alpha}_{ij}\overline{v_i}w_j
        \right\|,
      \end{gathered}      
    \end{equation*}
    then $U$ is uniform$_{\le 1}$ if and only if it has finite spectrum. 
  \item\label{item:th:if.lg.uaij:cls.pt} In particular, said equivalence holds if the reduced version of $\bG$ \emph{has a classical point}, in the sense that $\cC_r(\bG)$ has a multiplicative state. Equivalently: if $\bG$ is \emph{coamenable} \cite[Definition 3.1]{bt}. 
  \end{enumerate}
\end{theorem}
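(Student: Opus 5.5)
The implication from finite spectrum to uniformity$_{\le 1}$ is the easy direction. It follows from \cite[Lemma 3.8]{Chirvasitu2026JNCG604}, or directly from \Cref{th:ban.unif.cqg.cls.pt}: a representation with finite spectrum is norm-continuous on all of $\cC(\bG)^*$, hence a fortiori on its unit ball. So the work is the converse under the tempered-decay hypothesis. I argue by contraposition. Suppose $\spec U$ is infinite, and pick pairwise distinct $\alpha_n\in\Irr(\bG)$ occurring in $U$, with corresponding isometric embeddings $\bC^{\dim\alpha_n}\hookrightarrow H$ intertwining $u^{\alpha_n}$ into $U$.

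The plan is to produce functionals $\psi_n$ in the unit ball of $\cC(\bG)^*$ with two properties:
\begin{itemize}[wide]
\item $\psi_n\to 0$ weak$^*$;
\item the operators $(\psi_n\otimes\id)U$ have norm bounded below by a fixed positive constant.
\end{itemize}
This contradicts uniformity$_{\le 1}$, since weak$^*$-to-norm continuity on the ball at $0$ would force $(\psi_n\otimes\id)U\to 0$ in norm.

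To build $\psi_n$, use tempered decay. We have $\|u^{\alpha_n}\|_\infty^{\cC_r(\bG)}>C$, so there are unit vectors $v_n,w_n$ with $\|\braket{v_n|u^{\alpha_n}|w_n}\|_{\cC_r(\bG)}>C$. By Hahn--Banach there is a state-norm functional $\varphi_n$ on $\cC_r(\bG)$ with $|\varphi_n(\braket{v_n|u^{\alpha_n}|w_n})|>C$. Pulling $\varphi_n$ back to $\cC(\bG)$ via the surjection $\cC(\bG)\to\cC_r(\bG)$, we get $(\varphi_n\otimes\id)U$ acting on the $\alpha_n$-summand as the matrix $(\varphi_n(u^{\alpha_n}_{ij}))_{ij}$. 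Its compression between $w_n$ and $v_n$ has modulus $>C$, so $\|(\varphi_n\otimes\id)U\|>C$.

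The $\varphi_n$ need not tend to $0$ weak$^*$. Pass to a weak$^*$-convergent subnet with limit $\varphi$, and set $\psi_n:=(\varphi_n-\varphi)/2$. These lie in the unit ball and converge weak$^*$ to $0$. It then remains to show that $(\varphi\otimes\id)U$ restricted to the $\alpha_n$-summands becomes small, so that $\|(\psi_n\otimes\id)U\|\ge C/2-o(1)$.

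This is where the Riemann--Lebesgue-type decay advertised in the abstract enters, and I expect it to be the main obstacle. The claim needed is that for a fixed $\varphi$ factoring through $\cC_r(\bG)$, the Fourier coefficients $\varphi(u^{\alpha}_{ij})$ tend to $0$ as $\alpha\to\infty$ in operator norm on $\bC^{\dim\alpha}$. The first line of attack is to approximate $\varphi$ by functionals of the form $h(a\,\cdot)$, with $h$ the Haar state and $a$ in $\cC_r(\bG)$. Such functionals are dense in the relevant topology after restricting to $\spn\{u^\alpha_{ij}\}$, using faithfulness of $h$ on $\cC_r$. For such functionals, orthogonality relations give $\ell^2$-type decay of coefficients, with uniform norm control coming from Cauchy--Schwarz.

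If this density step fails in the norm needed, a fallback is available. Since the $\alpha_n$ are distinct and tempered decay is uniform, one can pass to a subsequence and use the $\psi_n$ directly. Alternatively, use differences $\varphi_n-\varphi_m$ for $n,m$ far apart along a weak$^*$-Cauchy subnet; this still yields a norm gap, because $(\varphi_m\otimes\id)U$ restricted to the $\alpha_n$-block is small for $m$ fixed and $n$ large, again by the same decay estimate.

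Part \Cref{item:th:if.lg.uaij:cls.pt} reduces to \Cref{item:th:if.lg.uaij:temp.dec}. If $\varepsilon$ is a multiplicative state on $\cC_r(\bG)$, it must be the counit; applied to $u^\alpha$ it gives the identity matrix, so $\|u^\alpha\|_\infty^{\cC_r(\bG)}\ge 1$ for every $\alpha$. This is tempered decay with any $C<1$. The equivalence with coamenability is \cite{bt}.
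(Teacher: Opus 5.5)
\textbf{There is a genuine gap, at exactly the step you flag as the main obstacle.} The Riemann--Lebesgue claim is false for general functionals. A fixed $\varphi\in\cC(\bG)^*$ factoring through $\cC_r(\bG)$ need not satisfy $\|(\varphi(u^\alpha_{ij}))_{ij}\|\to 0$. In the coamenable setting of part (2), the counit is a state on $\cC_r(\bG)$ with $\varepsilon(u^\alpha)=I_{d_\alpha}$ for every $\alpha$. Classically this is a Dirac measure: Riemann--Lebesgue is a statement about $L^1$, not about measures.

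Worse, nothing in your Hahn--Banach step prevents exactly this outcome. With $C<1$ and $v_n=w_n$ arbitrary unit vectors, the choice $\varphi_n:=\varepsilon$ gives $|\varphi_n(\Braket{v_n\mid u^{\alpha_n}\mid v_n})|=1>C$ for all $n$. Then $\varphi=\varepsilon$ and $\psi_n=0$.

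The density of the functionals $h(a\,\cdot)$ that you invoke is only weak$^*$ density, which gives no control uniform in $\alpha$: norm limits of the $h(a\,\cdot)$ are normal functionals, and $\varepsilon$ is not one of them. Both fallbacks appeal to ``the same decay estimate'' for the uncontrolled $\varphi_m$, so they inherit the problem.

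\textbf{What is missing is a source of exact orthogonality to replace asymptotic decay of an arbitrary functional.} The paper gets it by switching legs. By \Cref{pr:cast2nm.sym.legs}, uniformity$_{\le 1}$ means that $\omega_{v,w}\mapsto(\id\otimes\omega_{v,w})U$ turns weak$^*$-Cauchy nets into norm-Cauchy ones. Choose $v_\alpha,w_\alpha\in\im P^\alpha$ with $(\id\otimes\omega_\alpha)U=\Braket{v_{\alpha0}\mid u^\alpha\mid w_{\alpha0}}$. A weak$^*$-Cauchy subnet of the $\omega_\alpha$ then gives a norm-convergent net of pairwise Haar-orthogonal elements of $\cC_r(\bG)$-norm $>C$. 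Its limit must vanish because $h$ is faithful on $\cC_r(\bG)$, which is a contradiction.

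\textbf{Your left-leg strategy can also be repaired, but only by choosing the $\varphi_n$ normal.}
\begin{enumerate}[(i)]
\item Take $\varphi_n:=\omega_{\xi_n,\eta_n}\circ\pi_h$ with unit vectors $\xi_n,\eta_n\in L^2(\bG)$. Vector functionals norm $\cC_r(\bG)$, so the lower bound $>C$ survives.
\item For such functionals Riemann--Lebesgue genuinely holds. For $\xi,\eta\in\cO(\bG)\subset L^2(\bG)$, the coefficient $h(\xi^*u^\alpha_{ij}\eta)$ vanishes unless $\alpha$ lies in a finite set. The bound $\|(\omega\otimes\id)u^\alpha\|\le\|\omega\|$ then passes this to norm limits.
\item Do not subtract the weak$^*$ limit, which may be singular. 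Instead take $\tfrac12(\varphi_{n_{\lambda'}}-\varphi_{n_\lambda})$ along a weak$^*$-convergent subnet, with $\lambda'$ chosen late enough relative to $\lambda$ that $\|\varphi_{n_\lambda}(u^{\alpha_{n_{\lambda'}}})\|<C/2$.
\item Such pairs $(\lambda,\lambda')$ form a directed set. Along it these functionals tend to $0$ weak$^*$, while their slices against $U$ keep norm $>C/4$.
\end{enumerate}
This route avoids the leg-switching proposition altogether.

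\textbf{Part (2) needs a small correction.} A multiplicative state on $\cC_r(\bG)$ need not be the counit; classically, any point evaluation is one. The bound still holds: for any character $\chi$, the matrix $(\chi\otimes\id)u^\alpha$ is unitary, so $\|u^\alpha\|_\infty^{\cC_r(\bG)}\ge 1$.
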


Counterexamples (see \Cref{th:rd.exp.dim}) show that one cannot \emph{completely} dispense with some type of decay control on the matrix coefficients of $\bG$. 

%%%%%%%%%%%%%%%%%%%%%%%%%%%%%%%%
\subsection*{Acknowledgments}

Initial drafts have benefited from M. Brannan's valuable input

% %

%%%%%%%%%%%%%%%%%%%%%%%%%%%%%%%%
%%%%%%%%%%%%%%%%%%%%%%%%%%%%%%%%
\section{Quantum uniform continuity in several guises, and consequences}\label{se:qunif}

Some of the usual operator-algebraic notation and conventions will be operative: $A\bullet B$ with $\bullet\in\{\otimes,\ \underline{\otimes},\ \overline{\otimes}\}$ stand for the ordinary algebraic, and \emph{spatial} (or \emph{minimal} or \emph{injective}) \cite[Definitions IV.4.8 and IV.5.1]{tak1} $C^*$ and $W^*$ tensor products respectively (between operator algebras of context-appropriate type), $M(-)$ is the \emph{multiplier algebra} \cite[Theorem II.7.3.1]{blk}, $\cL(-,-)$ and $\cK(-,-)$ denote spaces of bounded and compact operators respectively between Hilbert/Banach spaces, and we refer freely to \emph{slice maps} (\cite[\S\S II.9.7.1, III.2.2.6]{blk}, \cite[Lemma 1.5]{lprs}, \cite[\S 1]{MR567834}, etc.)
\begin{equation*}
  \begin{aligned}
    M\left(A\underline{\otimes} B\right)
    &\xrightarrow{\quad\psi\otimes \id\quad}
      M(B)\\
    A\overline{\otimes} B
    &\xrightarrow{\quad\psi\otimes \id\quad}
      B
  \end{aligned}
  ,\quad
  \psi\in A^*\left(:=\text{continuous dual of $A$}\right)
\end{equation*}

For compact quantum $\bG$ with underlying function algebra $\cC(\bG)$ we work throughout with Woronowicz's matrix coefficients (\cite[\S\S 1 and 4]{wor}, \cite[\S 1, pp.2-3]{podl_symm}, \cite[\S\S 1.3 and 1.4]{NeTu13}, etc.)
\begin{equation*}
  u^{\alpha}_{ij}\in \cO(\bG)
  ,\quad
  \begin{aligned}
    \alpha\in\Irr(\bG)
    &:=
      \left\{\text{iso classes of simple $\cO(\bG)$-comodules}\right\}\\
    1\le i,j\le d_{\alpha}
    &
      :=\dim\alpha
  \end{aligned}
\end{equation*}
spanning the (unique dense \cite[Theorem 3.1.7]{kt_qg-surv-1}) Hopf $*$-algebra of \emph{representative functions} on a compact quantum group $\bG$. \emph{Unitary $\bG$-representations} on Hilbert spaces $H$ are as in \cite[\S 2.2]{dsv}:
\begin{equation*}
  \text{unitary }U\in M(\cC(\bG)\underline{\otimes}\cK(H))
  ,\quad
  (\Delta\otimes \id)U = U_{13}U_{23},
\end{equation*}
with $U_{ij}$ denoting $U$ applied on the $i^{th}$ and $j^{th}$ tensorands of the argument. Just as classically \cite[Theorem 4.22]{hm5}, there is a rich Peter-Weyl theory of unitary-representation decompositions into irreducible summands isomorphic to the various $\alpha\in \Irr(\bG)$ and corresponding \emph{spectral projections}
\begin{equation*}
  \left(\psi^{\alpha}\otimes\id\right)U^*
  =:
  P^{\alpha}=\left(P^{\alpha}\right)^* = \left(P^{\alpha}\right)^2\in \cL(H)
\end{equation*}
with $\psi^{\alpha}\in \cC(\bG)^*$ the functionals of \cite[(4)]{podl_symm}, agreeing with the counit $\varepsilon$ of $\cO(\bG)$ on the span of $u^{\alpha}_{ij}$ and annihilating all $u_{k\ell}^{\alpha'\ne \alpha}$. The range $\im P^{\alpha}$ is the $\alpha$-isotypic component of $U$ (the largest subrepresentation isomorphic to a sum of copies of $\alpha$), and 
\begin{equation*}
  \spec U:=\left\{\alpha\in \Irr(\bG)\ :\ P^{\alpha}\ne 0\right\}
\end{equation*}
is the \emph{spectrum} of $U$.

It will be convenient to also assume the canonical positive operators
\begin{equation}\label{eq:falpha}
  F_{\alpha}\in \cL(V_{\alpha})
  ,\quad
  \Tr F_{\alpha}=\Tr F^{-1}_{\alpha}=:q_{\alpha}
  \quad
  \text{\cite[Theorem 5.4]{wor}}
\end{equation}
diagonal. One effect this will have is to render the $u^{\alpha}_{ij}$, (via \cite[(5.14) and (5.15)]{wor} or \cite[Theorem 1.4.3]{NeTu13}), mutually orthogonal with respect to both versions of the inner product induced by the \emph{Haar state} $h:=h_{\bG}$ of \cite[\S 4]{wor} or \cite[\S 1.2]{NeTu13}:
\begin{equation}\label{eq:inn.prds}
  \Braket{x\mid y}_r := h(x^*y)
  \quad\text{and}\quad
  \Braket{x\mid y}_{\ell} := h(x y^*),
\end{equation}
the index indicating the position (left/right) of the product's linear argument. $\cC_r(\bG)$ denotes the \emph{reduced} version of $\cG$, i.e. \cite[post Corollary 1.7.5]{NeTu13} the image of $\cC(\bG)$ through the GNS representation attached to $h$. 

For a unital $C^*$-algebra $A$ an element $x\in \cC(\bG)\underline{\otimes} A$ has a Fourier-series decomposition
\begin{equation*}
  x\sim\sum_{\substack{\alpha\in\Irr(\bG)\\1\le i,j\le d_{\alpha}}}u^{\alpha}_{ij}\otimes x^{\alpha}_{ij},
\end{equation*}
the ``$\sim$'' indicating heuristic rather than an equality in any formal sense (convergence issues being ignored); the rigorous version is rather
\begin{equation}\label{eq:xaij}
  x^{\alpha}_{ij}
  :=
  \frac{q_{\alpha}}{F^{-1}_{\alpha ii}}
  (h\otimes\id)\left(u^{\alpha *}_{ij}x\right)
  =
  \frac{q_{\alpha}}{F_{\alpha jj}}
  \left(h\otimes\id)(x u^{\alpha *}_{ij}\right),
\end{equation}
the $u^{\alpha}_{ij}$ being assumed $h$-orthogonal as previously announced. This suggests the equally informal
\begingroup
\allowdisplaybreaks
\begin{align*}
  xx^*
  &\sim
    \sum_{\substack{\alpha,i,j\\\beta,k,l}}u^{\alpha}_{ij}u^{\beta*}_{kl} x^{\alpha}_{ij}x^{\beta*}_{kl}
  \xRightarrow{\quad}\\
  E\left(xx^*\right)
  :=
  \left(h\otimes\id\right)(xx^*)  
  &=
    \sum_{\substack{\alpha,i,j\\\beta,k,l}}h\left(u^{\alpha}_{ij}u^{\beta*}_{kl}\right) x^{\alpha}_{ij}x^{\beta*}_{kl}\numberthis\label{eq:e.nrm.cvg}\\
  &=
    \sum_{\alpha,i,j}\frac 1{q_{\alpha}}F_{\alpha jj}x^{\alpha}_{ij}x^{\alpha *}_{ij}
    \quad\text{\cite[(5.14)]{wor}}.
\end{align*}
\endgroup

The extent to which this last equality holds formally, with the right-hand side norm-convergent, is the focus of amply studied Fourier decay-at-infinity phenomena: quantum analogues of the familiar \cite[\S I.2]{bc_four_1949} \emph{Riemann-Lebesgue lemma} in classical Fourier analysis, as alluded to briefly in \cite[\S 1, p.34]{MR3302101}, say. We revisit the matter in the course of the proof of \Cref{th:fin.spec.equiv}.  

Note, first, an immediate consequence of \Cref{th:fin.spec.equiv}:

\begin{corollary}\label{cor:rep.in.a}
For any compact quantum group $\bG$ and unital $C^*$-algebra $A$ every $\bG$-representation $U\in \cC(\bG)\underline{\otimes}A$ in $A$ in the sense of \cite[Proposition 5.2]{kus-univ} has finite spectrum.  \qedhere
\end{corollary}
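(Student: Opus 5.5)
The plan is to reduce \Cref{cor:rep.in.a} to the implication \ref{item:th:fin.spec.equiv:unif}$\Rightarrow$\ref{item:th:fin.spec.equiv:fin.spc} of \Cref{th:fin.spec.equiv}, by representing $A$ faithfully on a Hilbert space. Fix a faithful unital representation $\pi:A\to\cL(H)$; by Gelfand--Naimark such a $\pi$ exists, and GNS with a faithful family of states gives one. A $\bG$-representation in $A$ in the sense of \cite[Proposition 5.2]{kus-univ} is a unitary $U\in\cC(\bG)\underline{\otimes}A$ with $(\Delta\otimes\id)U=U_{13}U_{23}$. Set
\begin{equation*}
  V:=(\id\otimes\pi)U\in \cC(\bG)\underline{\otimes}\cL(H).
\end{equation*}
The map $\id\otimes\pi$ is well defined on the spatial tensor product and injective, because minimal tensor products preserve injectivity of $*$-morphisms. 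So $V$ is a unitary satisfying $(\Delta\otimes\id)V=V_{13}V_{23}$. Since $\cC(\bG)\underline{\otimes}\cL(H)\subseteq M(\cC(\bG)\underline{\otimes}\cK(H))$ canonically, $V$ is a unitary $\bG$-representation on $H$ in the sense of \cite[\S 2.2]{dsv}. One should check that $V$ lies in that multiplier algebra with the correct corepresentation identity; this holds because $\cL(H)=M(\cK(H))$ and the inclusion $\cC(\bG)\underline{\otimes}M(\cK(H))\subseteq M(\cC(\bG)\underline{\otimes}\cK(H))$ is strictly compatible with $\Delta\otimes\id$.

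By construction $V$ satisfies condition \ref{item:th:fin.spec.equiv:unif} of \Cref{th:fin.spec.equiv}. That theorem then gives that $\spec V$ is finite, i.e.\ only finitely many spectral projections $(\psi^{\alpha}\otimes\id)V^*$ are nonzero.

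It remains to transport finiteness of spectrum back to $U$. The spectral projections of $U$ can be defined intrinsically in $A$ as $P^{\alpha}_U:=(\psi^{\alpha}\otimes\id)U^*\in A$, since slice maps by $\psi^{\alpha}\in\cC(\bG)^*$ send $\cC(\bG)\underline{\otimes}A$ to $A$. Slice maps commute with $\id\otimes\pi$, which one checks on elementary tensors and extends by continuity. Hence
\begin{equation*}
  \pi\left(P^{\alpha}_U\right)=P^{\alpha}_V ,
\end{equation*}
and faithfulness of $\pi$ gives $P^{\alpha}_U\ne 0\iff P^{\alpha}_V\ne0$. Therefore $\spec U=\spec V$ is finite.

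The only real content is \Cref{th:fin.spec.equiv} itself; in the corollary the one point requiring care is the identification of $V$ as a genuine unitary representation in the multiplier-algebra sense, together with the compatibility of slice maps with $\id\otimes\pi$. Both are routine. If one instead prefers to define $\spec U$ via a concrete Hilbert-space realization, the computation above also shows that $\spec U$ does not depend on the choice of faithful $\pi$.
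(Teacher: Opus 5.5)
Your proof is correct and is essentially the paper's route. The paper records this corollary as an immediate consequence of \Cref{th:fin.spec.equiv}: faithfully represent $A\subseteq\cL(H)$ so that $U$ satisfies \Cref{item:th:fin.spec.equiv:unif}, then invoke \Cref{item:th:fin.spec.equiv:unif}$\Rightarrow$\Cref{item:th:fin.spec.equiv:fin.spc}. Your additional checks (the embedding $\cC(\bG)\underline{\otimes}\cL(H)\subseteq M(\cC(\bG)\underline{\otimes}\cK(H))$ and $\pi\left(P^{\alpha}_U\right)=P^{\alpha}_V$ via slice maps commuting with $\id\otimes\pi$) simply make explicit the bookkeeping the paper leaves implicit.
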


\pf{th:fin.spec.equiv}
\begin{th:fin.spec.equiv}
  The equivalence \Cref{item:th:fin.spec.equiv:fin.spc} $\Leftrightarrow$ \Cref{item:th:fin.spec.equiv:adj} is covered by \cite[Theorem 3.16]{Chirvasitu2026JNCG604}, while \Cref{item:th:fin.spec.equiv:fin.spc} is plainly stronger than the other conditions: finite spectrum in fact entails
  \begin{equation*}
    U\in \cO(\bG)\otimes \cL(H)
    \quad
    \left(\text{algebraic tensor product}\right).
  \end{equation*}
  It remains to verify \Cref{item:th:fin.spec.equiv:unif} $\Rightarrow$ \Cref{item:th:fin.spec.equiv:fin.spc}, which task the proof henceforth turns to. Reprising the notation of \Cref{eq:e.nrm.cvg} for the usual \emph{conditional expectation} \cite[Definition II.6.10.1]{blk} $\cC(\bG)\underline{\otimes}\cL(H)\xrightarrowdbl{E}\cL(H)$, \Cref{pr:str2nrm} below ensures that that equation in fact holds as written, with a \emph{norm}-convergent right-hand side. Applied to $x:=U$, \Cref{eq:e.nrm.cvg} reads
  \begingroup
  \allowdisplaybreaks
  \begin{align*}
    1
    =
    E(1)
    =
    E\left(xx^*\right)
    &=
      \sum_{\alpha,j}\frac 1{q_{\alpha}}F_{\alpha jj}\sum_i x^{\alpha}_{ij}x^{\alpha *}_{ij}\\
    &=
      \sum_{\alpha}\left(\sum_j\frac 1{q_{\alpha}}F_{\alpha jj}\right) P^{\alpha}\\
    &=
      \sum_{\alpha} P^{\alpha}
  \end{align*}
  \endgroup
  as \emph{norm} convergence, where
  \begin{equation}\label{eq:spc.idemp.u}
    P^{\alpha}:=(\psi^{\alpha}\otimes \id)U^*\in \cL(H)
    ,\quad
    \cC(\bG)
    \ni
    u^{\beta}_{ij}
    \xmapsto{\quad\psi^{\alpha}\quad}
    \delta_{\alpha\beta}\delta_{ii}
    \in \bC
  \end{equation}
  are the respective projections (introduced in \cite[Theorem 1.5]{podl_symm} as $E^{\alpha}$) onto the \emph{$\alpha$-isotypic components} (cf. \cite[remarks preceding Proposition 13]{boc_zb}) of the representation $U$. Said norm convergence of mutually orthogonal projections, naturally, requires that only finitely many be non-zero. 
\end{th:fin.spec.equiv}

The proof of the auxiliary \Cref{pr:str2nrm} below will precisely parallel its analogue \cite[Lemma 5.2]{MR2873171} in the context of \emph{reduced crossed products} \cite[Definition 4.1.4]{bo_cast-approx_2008} resulting from actions by countable (classical, discrete) groups on $C^*$-algebras (so in our language, that result's $\bG$ would be an \emph{abelian}, i.e. dual-classical, compact quantum group). That source leverages weaker convergence into its stronger, norm counterpart via a device credited there to G. Elliott and U. Haagerup and reliant on \emph{Dini's} \cite[Theorem 12.1]{jst_post-an_3e_2005}\footnote{Said invocation of Dini, typically stated for sequences of real-valued functions on compact Hausdorff spaces, appears to be the one aspect of the proof of \cite[Lemma 5.2]{MR2873171} relying in any substantive fashion on any countability hypotheses. We will see in the course of the proof of \Cref{pr:str2nrm} that such countability assumptions can be dispensed with.} (see also \cite[Lemma 3.1]{mnw} for essentially the same gadget).

\begin{proposition}\label{pr:str2nrm}
  For a compact quantum group $\bG$, unital $C^*$-algebra $A$ and $x\in \cC(\bG)\underline{\otimes}A$ we have the norm-convergent expansions
  \begin{equation*}
    E(xx^*)
    =
    \sum_{\alpha,i,j}\frac 1{q_{\alpha}}F_{\alpha jj}x^{\alpha}_{ij}x^{\alpha *}_{ij}
    \quad\text{and}\quad
    E(x^*x)
    =
    \sum_{\alpha,k,\ell}\frac 1{q_{\alpha}}F_{\alpha kk}x^{\alpha *}_{k\ell}x^{\alpha}_{k\ell}
  \end{equation*}
  for the respective values of the expectation $\cC(\bG)\underline{\otimes}A\xrightarrow{E:=h\otimes\id}A$.
\end{proposition}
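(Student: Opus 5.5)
The plan is to follow the Elliott–Haagerup device from \cite[Lemma 5.2]{MR2873171}: first establish weak convergence of the partial sums, then upgrade it to norm convergence via positivity and a Dini-type argument. I treat $E(xx^*)$; the $E(x^*x)$ case is symmetric, using $\Braket{\cdot\mid\cdot}_r$ and the second formula in \Cref{eq:xaij}. For a finite set $\cF\subseteq\Irr(\bG)$, let $E_{\cF}:=\sum_{\alpha\in\cF}(\text{projection onto }\spn\{u^\alpha_{ij}\})\otimes\id$ be the Fourier truncation, realized as a slice by a finite combination of functionals $h(u^{\alpha*}_{ij}\,\cdot)$. Write $S_{\cF}:=\sum_{\alpha\in\cF,i,j}\frac1{q_\alpha}F_{\alpha jj}x^\alpha_{ij}x^{\alpha*}_{ij}$.

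\emph{Step 1 (algebraic identity and positivity).} By the orthogonality relations \cite[(5.14)]{wor}, for $y:=\sum_{\alpha\in\cF,i,j}u^\alpha_{ij}\otimes x^\alpha_{ij}$ one has $E(yy^*)=S_{\cF}$ and $E(xy^*)=E(yx^*)=S_{\cF}$. Here the $h$-orthogonality is with respect to $\Braket{\cdot\mid\cdot}_\ell$, and the coefficient normalization in \Cref{eq:xaij} is designed for exactly this. Hence
\begin{equation*}
  0\le E\bigl((x-y)(x-y)^*\bigr)=E(xx^*)-S_{\cF}.
\end{equation*}
So the net $S_{\cF}$ of positive elements is increasing and dominated by $E(xx^*)$.

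\emph{Step 2 (weak convergence).} For a state $\varphi$ on $A$, apply the GNS construction of $h\otimes\varphi$, or equivalently consider the Hilbert $A$-module $L^2(\bG)\otimes A$. Then $\varphi(E(xx^*)-S_{\cF})=\|(x-y)^*\|^2$ in $L^2(h\otimes\varphi)$, which is the squared distance from $x^*$ to its projection onto the $\cF$-isotypic part. This tends to zero because $\cO(\bG)\otimes A$ is norm-dense in $\cC(\bG)\underline{\otimes}A$, and norm approximation dominates the $L^2$ distance while the projection minimizes it. Thus $\varphi(S_{\cF})\to\varphi(E(xx^*))$ for every state $\varphi$.

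\emph{Step 3 (upgrade to norm; main obstacle).} The functions $f_{\cF}(\varphi):=\varphi(E(xx^*)-S_{\cF})$ are continuous on the weak$^*$-compact state space $S(A)$ and decrease pointwise to $0$ along the directed net. Dini's theorem for nets holds on compact spaces without any countability hypothesis: given $\varepsilon>0$, the open sets $\{f_{\cF}<\varepsilon\}$ cover $S(A)$ and increase with $\cF$, so a single $\cF$ suffices. Therefore $\sup_\varphi f_{\cF}\to0$. Since $E(xx^*)-S_{\cF}\ge0$, its norm equals this supremum, which yields norm convergence.

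The delicate point I expect is Step 2's identification of $\varphi(S_{\cF})$ with the squared norm of an orthogonal projection. This requires that the nonorthonormal scalings $F_{\alpha jj}/q_\alpha$ are exactly the right weights, which I would check directly from \cite[(5.14)]{wor}. An alternative route to the same conclusion is to approximate $x$ in norm by $x'\in\cO(\bG)\otimes A$ and use $\|E((x-x')(x-x')^*)\|\le\|x-x'\|^2$ together with the minimality from Step 1.
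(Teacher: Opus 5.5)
Your proposal is correct and is essentially the paper's argument. Both proofs get state-wise convergence from the $L^2$ orthogonal-projection picture: the paper phrases it as inserting the $\sigma$-strong$^*$ resolution of the identity $\sum_{\alpha,i,j}\frac{q_\alpha}{F_{\alpha jj}}T^{\alpha}_{ij}T^{\alpha*}_{ij}=1$ on $L^2(\bG)\otimes H$, with $A$ universally represented on $H$. Both then upgrade to norm convergence by Dini's theorem for monotone nets on the compact state space, which you prove directly where the paper cites it.

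Your closing alternative is also sound and actually dispenses with Dini. The cross terms vanish in the $A$-valued inner product, so $0\le E(xx^*)-S_{\cF}\le E\bigl((x-x')(x-x')^*\bigr)$, which has norm at most $\|x-x'\|^2$, for every $x'\in\cO(\bG)\otimes A$ Fourier-supported in $\cF$. Note too that in the symmetric $E(x^*x)$ case it is the first formula of \Cref{eq:xaij} that enters, not the second. That case produces the weight $F^{-1}_{\alpha kk}$ rather than the $F_{\alpha kk}$ printed in the statement, and these differ unless $F_{\alpha}=1$.
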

\begin{proof}
  With nothing of any substance distinguishing the two branches, we address the first claim only. Write
  \begin{itemize}[wide]
  \item $A\xrightarrow{\pi}\cL(H)$ for a representation of $A$;
  \item $\cC(\bG)\xrightarrow{\pi_{h}}\cL\left(L^2(\bG)\right)$ for the GNS representation of the Haar state $h\in \cC(\bG)^*$: the completion of $\cO(\bG)$ under the inner product $\Braket{-\mid -}_r$ of \Cref{eq:inn.prds};
  \item and
    \begin{equation*}
      H\ni \xi
      \xmapsto[\quad\substack{\alpha\in \Irr(\bG)\\1\le i,j\le d_{\alpha}}\quad]{\quad T^{\alpha}_{ij}\quad}
      u^{\alpha*}_{ij}\otimes \xi
      \in L^2(\bG)\otimes H.
    \end{equation*}
  \end{itemize}
  Observe, next, the convergence
  \begin{equation*}
    \sum_{\alpha,i,j}
    \frac{q_{\alpha}}{F_{\alpha jj}}
    T^{\alpha}_{ij}T^{\alpha *}_{ij}
    =1
    \in
    \cL\left(L^2(\bG)\otimes H\right)
  \end{equation*}
  in the \emph{$\sigma$-strong$^*$} topology of \cite[Definition II.2.3]{tak1} (the strongest of the locally convex sub-norm topologies examined in \cite[\S II.2]{tak1}): the individual summands $\displaystyle\frac{q_{\alpha}}{F_{\alpha jj}}T^{\alpha}_{ij}T^{\alpha *}_{ij}$ are (by \cite[(5.14)]{wor} again) mutually orthogonal projections whose ranges exhaust the underlying Hilbert space. 

  Note also:
  \begin{equation*}
    \forall\left(\alpha\in \Irr(\bG)\right)
    \forall\left(1\le i,j\le d_{\alpha}\right)
    \bigg(
    \pi x^{\alpha}_{ij}
    =
    \frac{q_{\alpha}}{F_{\alpha jj}}
    T^{\mathbf{1}*}_{11}\left(\pi_h\otimes \pi\right)(x)T^{\alpha}_{ij}
    \bigg)
  \end{equation*}
  ($\mathbf{1}\in \Irr(\bG)$ denoting the trivial representation). As in the aforementioned \cite[Lemma 5.2]{MR2873171}, this now entails the pointwise convergence
  \begin{equation*}
    \pi E(xx^*)
    =
    \sum_{\alpha,i,j}\frac {F_{\alpha jj}}{q_{\alpha}}\pi \left(x^{\alpha}_{ij}x^{\alpha *}_{ij}\right)
  \end{equation*}
  on the space of states on $A$ inherited from the weak$^*$ ones on $\cL(H)$ via $\pi$. We can arrange for this to be the entire state space $\cS(A)$ (take for $\pi$ the \emph{universal representation} \cite[\S 3.7.6]{ped-aut}), hence also \emph{uniform} convergence by the version of Dini's theorem proven in \cite{MR1764529}: valid for monotone \emph{nets} of functions valued in arbitrary uniform spaces (monotonicity being defined appropriately to that context) rather than real-valued-function \emph{sequences}.
\end{proof}

Recall next the notion of $\bG$-representation on a Banach space $E$ for a compact quantum group $\bG$: 
\begin{equation}\label{eq:coact.e}
  \begin{gathered}
    E
    \xrightarrow[\quad\text{coassociative}\quad]{\quad\rho\quad}
    \cC(\bG)\otimes_{\varepsilon} E
    \quad
    \left(\text{\emph{injective} \cite[\S 3.1]{ryan_ban} Banach tensor product}\right)    \\
    \overline{
      \spn
      \left\{
        (x\otimes \id)\rho v\ :\ x\in \cC(\bG),\ v\in E
      \right\}
    }
    =
    \cC(\bG)\otimes_{\varepsilon} E.
  \end{gathered}  
\end{equation}
One again has spectral idempotents $P^{\alpha}:=(\psi^{\alpha}\otimes\id)\circ\rho\in \cL(E)$ analogous to \Cref{eq:spc.idemp.u} and, by \cite[Proposition]{Chirvasitu2026JNCG604}, the more familiar unitary representations $U\in M(\cC(\bG)\underline{\otimes}\cK(H))$ can be recast as representations on the underlying Banach spaces by
\begin{equation}\label{eq:uast.1v}
  H\ni v
  \xmapsto{\quad}
  U^*(1\otimes v)
  \in
  \cC(\bG)\otimes_{\varepsilon} H.
\end{equation}

% %

\pf{th:ban.unif.cqg.cls.pt}
\begin{th:ban.unif.cqg.cls.pt}
  While \cite[Lemma 3.8]{Chirvasitu2026JNCG604} notes only that having finite spectrum implies \Cref{eq:ban.rpr.nrm}, the converse is also straightforward: weak$^*$-to-norm continuity is an extremely strong condition to impose, holding only in trivial circumstances.

  To elaborate, consider a linear map $E\xrightarrow{T}F$ between \emph{locally convex} \cite[\S 18.1]{k_tvs-1} topological vector spaces. Its continuity with respect to
  \begin{itemize}[wide]
  \item the \emph{weak (or simple) topology} ($\mathfrak{T}_s$ in \cite[\S 20.2]{k_tvs-1});
  \item and the given topology on $F$, assuming some origin neighborhood $U$ contains no non-trivial linear subspaces of $F$ (so in particular for normable $F$)
  \end{itemize}
  automatically entails the finite dimensionality of $\im T$. This is little more than an unpacking of the hypotheses: there must be finitely many continuous functionals $f_i\in E^*$ with
  \begin{equation*}
    \forall\left(v\in \bigcap_i \ker f_i\right)
    \left(Tv\in U\right).
  \end{equation*}
  One can scale $v$ at will, so the assumption on $U$ forces $T$ to vanish on the finite-codimensional $\bigcap_i \ker f_i$. 
\end{th:ban.unif.cqg.cls.pt}

In view of the preceding proof, rendering \Cref{th:ban.unif.cqg.cls.pt} immediate by means of unrestrained weak continuity, one might attempt adjacent refinements to uniformity.

\begin{definition}\label{def:unif.bdd}
  A Banach-space representation \Cref{eq:coact.e} of a compact quantum group is \emph{boundedly uniform} or \emph{uniform$_{\le 1}$} if the continuity in \Cref{eq:ban.rpr.nrm} holds on norm-bounded subsets of $\cC(\bG)^*$ (equivalently, on any one norm-bounded origin neighborhood, e.g. the unit ball). 
\end{definition}

For classical $\bG$ bounded and plain uniformity are equivalent: as noted in the Introduction following \Cref{th:ban.unif.cqg.cls.pt}, uniformity$_{\le 1}$ already implies spectrum finiteness and hence \cite[Lemma 3.8]{Chirvasitu2026JNCG604} also ordinary uniformity. It is not inapposite, then, to ask whether the equivalence survives quantization.

As further motivation and surrounding context for \Cref{th:if.lg.uaij}, recall \cite[Remark 1.3]{2602.18878v1} to the effect that the classical \cite[Corollary 2]{zbMATH05628052} characterization of norm-continuous compact-group representations as precisely those with finite spectra is recovered by non-classical means from \cite[Theorem 0.1]{2602.18878v1} for compact quantum groups $\bG$ with a \emph{classical point} (i.e. with $\cC(\bG)$ admitting a multiplicative state); by extension, the same goes for \Cref{th:fin.spec.equiv} (which removes the classical-point constraint). \Cref{th:if.lg.uaij} describes how, upon assuming a small amount of classical behavior, said classical result replicates in terms of what is perhaps the most straightforward analogue of quantum uniform continuity for a unitary $\bG$-representation.

An initial remark will help set the stage for further investigation. 

\begin{lemma}\label{le:u.lleg}
  A unitary representation $U\in M(\cC(\bG)\underline{\otimes}\cK(H))$ of a compact quantum group $\bG$ is uniform$_{\le 1}$ as a Banach-space representation if and only if either one of 
  \begin{equation*}
    \cC(\bG)^*\ni \psi
    \xmapsto{\quad}
    (\psi\otimes\id)U
    \text{ or }
    (\psi\otimes\id)U^*
    \in
    \cL(H)
  \end{equation*}
  is weak$^*$-to-norm continuous on the unit ball of $\cC(\bG)^*$.
\end{lemma}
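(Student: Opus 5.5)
The plan is to unwind the Banach-space recasting \Cref{eq:uast.1v} and to compare the two maps through the antipode/adjoint structure on $\cC(\bG)^*$.

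First I identify the operator attached to $\psi$ by the Banach representation. Under \Cref{eq:uast.1v} we have $\rho v=U^*(1\otimes v)$. Hence
\begin{equation*}
  (\psi\otimes\id)\rho v=\left((\psi\otimes\id)U^*\right)v,
\end{equation*}
where the slice $(\psi\otimes\id)U^*\in \cL(H)=M(\cK(H))$ is taken on the multiplier algebra, as in the opening conventions of \Cref{se:qunif}. I would check this identity first for $\psi$ a vector functional composed with a representation and for $U^*$ in the algebraic tensor product, then pass to general elements by strict density and boundedness of slices. The Banach-space uniformity$_{\le 1}$ of \Cref{def:unif.bdd} then literally coincides with weak$^*$-to-norm continuity of $\psi\mapsto(\psi\otimes\id)U^*$ on the unit ball. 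This handles the ``$U^*$'' branch.

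Next I show that continuity for the $U^*$-map on the unit ball is equivalent to continuity for the $U$-map. The natural device is the conjugate functional $\psi^*(x):=\overline{\psi(x^*)}$. It is an isometric, conjugate-linear involution of $\cC(\bG)^*$ and is weak$^*$-homeomorphic, since $\psi_\lambda\to\psi$ weak$^*$ iff $\psi_\lambda(x^*)\to\psi(x^*)$ for all $x$. One has
\begin{equation*}
  (\psi^*\otimes\id)U=\left((\psi\otimes\id)U^*\right)^*,
\end{equation*}
which I would verify on elementary tensors and extend by strict continuity. The adjoint on $\cL(H)$ is a norm isometry. So composing the $U^*$-map with $\psi\mapsto\psi^*$ on the domain side and with the adjoint on the target side yields the $U$-map. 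Both of these are homeomorphisms in the relevant topologies and preserve the unit ball. Continuity on the unit ball therefore transfers in both directions.

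The main obstacle is the bookkeeping of the slice maps on the multiplier algebra $M(\cC(\bG)\underline{\otimes}\cK(H))$ rather than on the $C^*$-algebra itself. I need $(\psi\otimes\id)$ to be well defined into $\cL(H)$, to commute with adjoints in the twisted sense above, and to match the Banach-space formula $(\psi\otimes\id)\rho$ computed inside $\cC(\bG)\otimes_\varepsilon H$. I would settle this by writing $U^*(1\otimes v)$, for $v\in H$, as a norm limit in $\cC(\bG)\underline{\otimes}\cK(H)\cdot(1\otimes v)\subseteq\cC(\bG)\otimes_\varepsilon H$. I would then use that, on $\cC(\bG)\otimes_\varepsilon H\cong$ continuous $H$-valued maps on the state space, slicing by $\psi$ is norm-continuous. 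That reduces everything to elementary tensors, where both identities are immediate.
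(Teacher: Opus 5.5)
Your proposal is correct and is essentially the paper's own argument: the $U^*$ branch is read off directly from the conversion \Cref{eq:uast.1v}, and the passage between the $U$ and $U^*$ maps goes through the conjugate functional $\psi^*:=\overline{\psi(\bullet^*)}$ together with the identity $\left((\psi\otimes\id)U\right)^*=(\psi^*\otimes\id)U^*$, which is yours after substituting $\psi^*$ for $\psi$. Your extra bookkeeping on multiplier-algebra slices fills in what the paper leaves implicit. One small caveat there: for noncommutative $\cC(\bG)$, the map from $\cC(\bG)\otimes_{\varepsilon}H$ into continuous $H$-valued functions on the state space is only a topological, not an isometric, embedding. This does not matter, since all you need is that $\psi\otimes\id$ is norm-bounded on $\cC(\bG)\otimes_{\varepsilon}H$, which follows immediately from the definition of the injective norm.
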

\begin{proof}
  For the latter map this is immediate given the conversion procedure \Cref{eq:uast.1v} from unitary to Banach-space representations, while the passage between the two is effected by $\left(\left(\psi\otimes\id\right)U\right)^*=\left(\psi^*\otimes\id\right)U^*$ for $\psi^*:=\overline{\psi\left(\bullet^*\right)}$.
\end{proof}

The device isolated by \Cref{pr:cast2nm.sym.legs} below, implicitly also driving the proof of \cite[Proposition 2.2]{MR567834}, will allow some symmetrization in the condition on $U$ imposed by \Cref{le:u.lleg}. Some vocabulary will help compress the statement.

\begin{definition}\label{def:lr.unif}
  For von Neumann algebras $M$ and $N$ and a property $\cP$ that a map $M^*\to N$ may or may not have, $x\in M\overline{\otimes} N$ is said to have property $\cP_{\ell}$ or to \emph{be} $\cP_{\ell}$ (for ``left'') if the slice map
  \begin{equation*}    
    M^*\ni
    \varphi
    \xmapsto{\quad}
    \left(\varphi\otimes \id\right)x
    \in N
  \end{equation*}
  induced by $x$ is $\cP$.

  The right-handed mirror is phrased in the expected fashion: $\cP_r$ in place of $\cP_{\ell}$, for the package involving $\id\otimes \varphi$ in place of $\varphi\otimes \id$. The terminology further applies in its guessable $C^*$ incarnation: $x\in M\underline{\otimes}N$.  
\end{definition}

One relevant property $\cP$ is the uniformity of \Cref{eq:ban.rpr.nrm} and (of more interest here) its bounded variant of \Cref{def:unif.bdd}. 

\begin{proposition}\label{pr:cast2nm.sym.legs}
  For $x\in M\overline{\otimes}N$ in the $W^*$ tensor product of two von Neumann algebras the following conditions are equivalent.
  \begin{enumerate}[(a),wide]
  \item\label{item:pr:cast2nm.sym.legs:unif.l} $x$ is uniform$_{\le 1,\ell}$.

  \item\label{item:pr:cast2nm.sym.legs:unif.0.l} $x$ is uniform$_{\le 1,\ell}$ with respect to (bounded) nets of weak$^*$ functionals converging to $0$.
    
  \item\label{item:pr:cast2nm.sym.legs:unif.w0.l} As in \Cref{item:pr:cast2nm.sym.legs:unif.0.l}, for nets of weak functionals instead.
    
  \item\label{item:pr:cast2nm.sym.legs:unif.vect0.l} Setting $\omega_{v,w}:=\Braket{v\mid \bullet w}\in M_*$ for $v,w\in H$ in a Hilbert space on which $M$ is represented faithfully as $M\le \cL(H)$,
    \begin{equation*}
      \omega_{v,w}
      \xmapsto{\quad}
      (\omega_{v,w}\otimes \id)x
      \in
      \cC(\bG)
    \end{equation*}
    turns weak$^*$-Cauchy nets  into their norm counterparts.
    
  \item\label{item:pr:cast2nm.sym.legs:unif.all.r} All or any of the above, in the right-handed version. 
  \end{enumerate}
\end{proposition}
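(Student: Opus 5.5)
The plan is to recast every condition as a compactness statement about the slice operator
\begin{equation*}
  T:=T_x:\ M_*\ni \varphi\xmapsto{\quad}(\varphi\otimes\id)x\in N,
\end{equation*}
and then use Schauder's theorem to pass between the two legs. The bridge to the right leg is the duality identity
\begin{equation*}
  \left\langle \omega,\ (\varphi\otimes\id)x\right\rangle
  =
  \left\langle \varphi,\ (\id\otimes\omega)x\right\rangle
  ,\quad \varphi\in M_*,\ \omega\in N_*.
\end{equation*}
It shows that the right-handed slice map $\omega\mapsto(\id\otimes\omega)x$ is the restriction of $T^*$ to $N_*\subseteq N^*$. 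It also shows that $T$ is weak$^*$-to-weak$^*$ continuous, in the sense $\sigma(M_*,M)\to\sigma(N,N_*)$. Condition \Cref{item:pr:cast2nm.sym.legs:unif.all.r} should then follow once the left conditions are reduced to ``$T$ is compact'', since compactness is symmetric under adjoints.

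\emph{The elementary equivalences.} By linearity, \Cref{item:pr:cast2nm.sym.legs:unif.l} $\Leftrightarrow$ \Cref{item:pr:cast2nm.sym.legs:unif.0.l}. If $\varphi_\lambda\to\varphi$ weak$^*$ in the unit ball, then $\varphi_\lambda-\varphi\to0$ in the ball of radius $2$, and continuity at $0$ on bounded sets gives continuity on the whole ball. Weakly null nets are weak$^*$ null, so \Cref{item:pr:cast2nm.sym.legs:unif.0.l} $\Rightarrow$ \Cref{item:pr:cast2nm.sym.legs:unif.w0.l}. Likewise \Cref{item:pr:cast2nm.sym.legs:unif.l} $\Rightarrow$ \Cref{item:pr:cast2nm.sym.legs:unif.vect0.l}: the vector functionals $\omega_{v,w}$ with $\|v\|,\|w\|\le1$ form a bounded set, and a uniformly continuous map on it sends weak$^*$-Cauchy nets to norm-Cauchy nets.

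\emph{Reduction to compactness.} I would show that each of \Cref{item:pr:cast2nm.sym.legs:unif.w0.l} and \Cref{item:pr:cast2nm.sym.legs:unif.vect0.l} implies that $T$ is compact, and that compactness of $T$ implies \Cref{item:pr:cast2nm.sym.legs:unif.l}.
\begin{itemize}[wide]
\item \emph{Compact implies \Cref{item:pr:cast2nm.sym.legs:unif.l}.} Let $K$ be the norm closure of the image of the unit ball under $T$. If $T$ is compact, $K$ is norm compact. On $K$ the norm topology agrees with the Hausdorff, coarser topology $\sigma(N,N_*)$. Composing with the weak$^*$-to-weak$^*$ continuity of $T$ gives \Cref{item:pr:cast2nm.sym.legs:unif.l}.
\item \emph{\Cref{item:pr:cast2nm.sym.legs:unif.vect0.l} implies compact.} By Alaoglu applied in $M^*$, every net of vector functionals in the ball has a subnet that is weak$^*$-Cauchy against $M$. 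Hypothesis \Cref{item:pr:cast2nm.sym.legs:unif.vect0.l} then makes $T$ of the vector-functional ball totally bounded. Next, a normal functional of norm $\le1$ is a norm-convergent sum $\sum_n\omega_{v_n,w_n}$ with $\sum\|v_n\|\|w_n\|\le1$. This holds after passing to an amplification $H\otimes\ell^2$, or directly if $M$ is in standard form, where every normal functional is a single $\omega_{v,w}$. I would replace $H$ by such a representation first. Since $T$ is bounded, this puts the image of the ball inside the closed absolutely convex hull of a totally bounded set, which is compact by Mazur's theorem.
\item \emph{\Cref{item:pr:cast2nm.sym.legs:unif.w0.l} implies compact.} Here I would argue by contradiction with an $\varepsilon$-separated family $T\varphi_n$, using Rosenthal-type extraction of weakly Cauchy subnets in the ball. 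The alternative is to route through \Cref{item:pr:cast2nm.sym.legs:unif.vect0.l} via the standard-form trick.
\end{itemize}

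\emph{Symmetrization.} Schauder's theorem makes $T$ compact if and only if $T^*:N^*\to M_*^*=M$ is compact. That in turn holds if and only if its restriction to $N_*$ is compact, because the ball of $N_*$ is weak$^*$-dense in the ball of $N^*$ and $T^*$ is weak$^*$-continuous. By the duality identity, this restriction is exactly the right slice map, and the right-handed version of the reduction above finishes \Cref{item:pr:cast2nm.sym.legs:unif.all.r}.

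I expect the main obstacle to be the step from \Cref{item:pr:cast2nm.sym.legs:unif.vect0.l} or \Cref{item:pr:cast2nm.sym.legs:unif.w0.l} to compactness. Two points need care there:
\begin{itemize}[wide]
\item that weak$^*$-Cauchy subnets exist for general bounded nets, which requires working inside $M^*$ rather than $M_*$;
\item that control on vector functionals alone propagates to all of the ball of $M_*$, which is where the choice of representation $H$ matters.
\end{itemize}
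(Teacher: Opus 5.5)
Your compactness route works, and it differs from the paper's, but as written it never reaches non-normal functionals, so it does not establish (a). The paper never uses compact operators. It adapts the net argument behind the Proposition 2.2 it cites. Given a bounded net $\varphi_\lambda\to0$ in $M^*$ with $\|(\varphi_\lambda\otimes\id)x\|$ bounded below, it:
\begin{itemize}
\item picks norming vector functionals $\psi_\lambda=\omega_{v_\lambda,w_\lambda}$ for $N$;
\item extracts a weak$^*$-Cauchy subnet;
\item uses the right-leg hypothesis to make $(\id\otimes\psi_\lambda)x$ norm-convergent in $M$;
\item concludes $\psi_\lambda((\varphi_\lambda\otimes\id)x)=\varphi_\lambda((\id\otimes\psi_\lambda)x)\to0$.
\end{itemize}
Your reduction to compactness of $T$ plus Schauder makes the left/right symmetry a textbook fact. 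The price is Mazur and a decomposition of normal functionals, where the paper only needs vector functionals to be norming.

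\textbf{The gap.} Condition (a) concerns the slice map on \emph{all} of $M^*$ with $\sigma(M^*,M)$, whereas (b)--(d) only involve normal functionals. The paper's argument handles this for free, because $\varphi_\lambda$ is only ever evaluated on elements of $M$. In your write-up:
\begin{itemize}
\item ``(a)$\Leftrightarrow$(b) by linearity'' fails in the direction (b)$\Rightarrow$(a): a weak$^*$-convergent net in the ball of $M^*$ need not consist of normal functionals, nor have a normal limit.
\item Your step ``compact $\Rightarrow$ (a)'' uses only the $\sigma(M_*,M)\to\sigma(N,N_*)$ continuity of $T$, so it gives continuity on the ball of $M_*$ only.
\end{itemize}
The repair fits your framework. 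The slice map on $M^*$ is $\widetilde T=(T^*|_{N_*})^*$. It is $\sigma(M^*,M)$-to-$\sigma(N,N_*)$ continuous, since $\omega(\widetilde T\varphi)=\varphi((\id\otimes\omega)x)$ with $(\id\otimes\omega)x\in M$. By Goldstine, the ball of $M_*$ is $\sigma(M^*,M)$-dense in the ball of $M^*$. So $\widetilde T$ maps the ball of $M^*$ into the $\sigma(N,N_*)$-closure of $T(\text{ball of }M_*)$. That closure lies in your norm-compact, hence weak$^*$-closed, set $K$. On $K$ the two topologies agree, which gives (a). The same fix is needed for (a)$_r$.

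\textbf{Two smaller points.}
\begin{itemize}
\item In ``(c)$\Rightarrow$ compact'', Rosenthal's theorem is a sequential tool and is not needed. By Alaoglu, any bounded net of weakly continuous functionals has a weak$^*$-Cauchy subnet. The difference net $(\varphi_\lambda-\varphi_{\lambda'})_{(\lambda,\lambda')}$ is then a bounded weak$^*$-null net of weakly continuous functionals. So (c) implies (d) for the \emph{same} $H$, and no standard form is involved. Alternatively, (c)$\Rightarrow$(b) by norm density of weakly continuous functionals in $M_*$, which is the paper's route.
\item In ``(d)$\Rightarrow$ compact'' you may not replace $H$, since (d) is a hypothesis about the given representation. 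You do not need to. In any normal faithful representation, polar decomposition gives $\varphi(a)=|\varphi|(au)$, and $|\varphi|=\sum_n\omega_{\xi_n,\xi_n}$ with $\sum_n\|\xi_n\|^2=\|\varphi\|$. Hence $\varphi=\sum_n\omega_{\xi_n,u\xi_n}$ with $\sum_n\|\xi_n\|\,\|u\xi_n\|\le\|\varphi\|$, which is all your Mazur argument uses.
\end{itemize}
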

\begin{proof}
  The implications
  \begin{equation*}
    \text{\Cref{item:pr:cast2nm.sym.legs:unif.l}}_{\bullet}
    \xRightarrow{\quad}
    \text{\Cref{item:pr:cast2nm.sym.legs:unif.0.l}}_{\bullet}
    \xRightarrow{\quad}
    \text{\Cref{item:pr:cast2nm.sym.legs:unif.w0.l}}_{\bullet}
    ,\
    \text{\Cref{item:pr:cast2nm.sym.legs:unif.l}}_{\bullet}
    \xRightarrow{\quad}
    \text{\Cref{item:pr:cast2nm.sym.legs:unif.vect0.l}}_{\bullet}
    ,\quad
    \bullet\in\left\{\ell,r\right\}
  \end{equation*}
  are all formal.

  Note that \Cref{item:pr:cast2nm.sym.legs:unif.l}$_{\ell}$ concerns all of $M^*$, whereas the other $\bullet_{\ell}$ specialize to (at least) weak$^*$-continuous functionals. The proof of \cite[Proposition 2.2]{MR567834}, in present language, shows effectively that the strongest left-handed statement \Cref{item:pr:cast2nm.sym.legs:unif.l}$_{\ell}$ is implied by its right-handed counterpart phrased for nets $(\psi_{\lambda})_{\lambda}\subset N_{*,\le 1}$ (unit ball of the predual of $N$) assumed convergent $N^*_{\le 1}$. This is easily recast as \Cref{item:pr:cast2nm.sym.legs:unif.0.l}$_r$ though: the net in question is certainly Cauchy and hence $(\psi_{\lambda}-\psi_{\lambda'})_{\lambda,\lambda'}$ is 0-convergent; consequently,
  \begin{equation*}
    \text{\Cref{item:pr:cast2nm.sym.legs:unif.0.l}$_r$}
    \xRightarrow{\quad}
    \left(
      \left(\id\otimes\psi_{\lambda}\right)x
    \right)_{\lambda}
    \text{ norm-convergent}
    \quad
    \left(\text{by norm completeness}\right).
  \end{equation*}
  \Cref{item:pr:cast2nm.sym.legs:unif.w0.l}$_{\bullet}$ $\Rightarrow$ \Cref{item:pr:cast2nm.sym.legs:unif.0.l}$_{\bullet}$ by weak-in-weak$^*$ norm density \cite[Theorem II.2.6(iii)]{tak1}.
  
  In order to complete the proof of the proposition as a whole by confirming that \Cref{item:pr:cast2nm.sym.legs:unif.vect0.l}$_{r}$ implies \Cref{item:pr:cast2nm.sym.legs:unif.l}$_{\ell}$, it remains to observe that the aforementioned net $(\psi_{\lambda})_{\lambda}\subset N_{*,\le 1}$ can be chosen so as to range over any set $\cS\subset N_{\le 1}^*$ detecting norms in $N$ in the following sense:
  \begin{equation*}
    \forall\left(T\in N\right)
    \forall\left(\varepsilon>0\right)
    \exists\left(\psi\in \cS\right)
    \left(|\psi T|\ge \|T\|-\varepsilon\right). 
  \end{equation*}
  Plainly, the set $\cS$ of all functionals $\omega_{v,w}$ with norm-$(\le 1)$ $v,w\in H$ will do.
\end{proof}

The aforementioned symmetrization afforded by \Cref{pr:cast2nm.sym.legs} refers to switching \Cref{le:u.lleg}'s criterion to $U$'s right-hand leg, as will become apparent presently.

\pf{th:if.lg.uaij}
\begin{th:if.lg.uaij}
  \begin{enumerate}[label={},wide]
  \item\textbf{\Cref{item:th:if.lg.uaij:temp.dec}} Assume infinitely many spectral projections $P^{\alpha}$ of $U$ non-zero, entailing the corresponding non-vanishing Fourier coefficients $x^{\alpha}_{ij}$, $1\le i,j\le \dim\alpha$ of $x:=U$ defined by \Cref{eq:xaij} (having fixed the $u^{\alpha}$-engendering bases of the $\alpha$ as explained preceding \Cref{eq:inn.prds}, so as to ensure the $u^{\alpha}_{ij}$ are mutually orthogonal). The tempered-decay hypothesis ensures the existence of norm-1
    \begin{equation*}
      v_{\alpha 0}=\left(v_{\alpha 0i}\right)_{1\le i\le \dim\alpha}
      \quad\text{and}\quad
      w_{\alpha 0}=\left(w_{\alpha 0i}\right)_{1\le i\le \dim\alpha}
      \quad:\quad
      \left\|
        \sum_{i,j=1}^{\dim\alpha}u^{\alpha}_{ij}\overline{v_{\alpha 0i}}w_{\alpha 0j}
      \right\|
      >C.
    \end{equation*}
    Now consider $\omega_{\alpha}:=\omega_{v_{\alpha},w_{\alpha}}:=\Braket{v_{\alpha}\mid \bullet w_{\alpha}}\in \cL(H)_*$ for
    \begin{equation*}
      v_{\alpha}:=\sum _{i=1}^{\dim\alpha} v_{\alpha 0i}e_i
      \quad\text{and}\quad
      w_{\alpha}:=\sum _{i=1}^{\dim\alpha} w_{\alpha 0i}e_i
      ,\quad
      e_i\in \im x^{\alpha}_{ii}\le \im P^{\alpha}\text{ orthonormal},
    \end{equation*}
    so that by construction we have
    \begin{equation*}
      \left\|(\id\otimes\omega_{\alpha})U\right\|
      =
      \left\|
        \sum_{i,j=1}^{\dim\alpha}u^{\alpha}_{ij}\overline{v_{\alpha 0i}}w_{\alpha 0j}
      \right\|
      >C.
    \end{equation*}
    By \emph{Alaoglu's} \cite[Theorem V.3.1]{conw_fa}, the bounded family $\{\omega_{\alpha}\}_{\alpha}$ supports a weak$^*$-Cauchy net which we index by the $\alpha$ themselves, with the understanding that $\alpha$ eventually avoids every finite subset of $\Irr(\bG)$. 

    Now, on the one hand the net $\left((\id\otimes\omega_{\alpha})U\right)_{\alpha}$ must norm-converge in $\cC_r(\bG)$ to an element of norm $\ge C>0$. On the other, matrix-coefficient Haar orthogonality forces
    \begin{equation*}
      \forall\left(\alpha\ne \alpha'\in \Irr(\bG)\right)
      \left(
        \Braket{(\id\otimes\omega_{\alpha})U\mid (\id\otimes\omega_{\alpha'})U}_r=0
      \right)
      \quad
      \left(\text{in \Cref{eq:inn.prds}'s notation}\right).
    \end{equation*}
    $h$ being faithful \cite[Corollary 1.7.5]{NeTu13} on $\cC_r(\bG)$, the desired conclusion that uniformity$_{\le 1}$ must fail follows from that property's characterization as \Cref{pr:cast2nm.sym.legs}'s right-handed \Cref{item:pr:cast2nm.sym.legs:unif.vect0.l}.
    
  \item\textbf{\Cref{item:th:if.lg.uaij:cls.pt}} $\bG$ has a classical point precisely when the counit $\varepsilon$ has a continuous extension to all of $\cC(\bG)$ (see \cite[paragraph preceding Theorem 2.2]{zbMATH07377294}; also \Cref{le:clsc.pt.iff} below for additional equivalent characterizations), whence a state on $\cC_r(\bG)$ assigning the value $1$ to every $u^{\alpha}_{ii}$. Those elements, then, must all have norm 1 and the conclusion follows from part \Cref{item:th:if.lg.uaij:temp.dec}.  \qedhere
  \end{enumerate}
\end{th:if.lg.uaij}

In reference to the property featuring in \cite[Theorem 0.1]{2602.18878v1} of compact quantum groups having at least one classical point (one version of the above-mentioned ``partial classical behavior''), \Cref{le:clsc.pt.iff} records a simple remark for whatever future-reference purpose it may serve. We will be referring to the usual \emph{convolution product} \cite[Definition 1.4.1]{mont}, denoted here simply by ``$\cdot$'' or juxtaposition, and defined as the precomposition
\begin{equation*}
  \cO(\bG)^{*\otimes 2}
  \xrightarrow{\quad \circ\Delta\quad}
  \cO(\bG)^*
\end{equation*}
on the algebraic dual of a coalgebra. Note also that it (or rather its appropriate extension) makes the state space $\cS(\cC(\bG))$ of the $C^*$-algebra $\cC(\bG)$ into a compact Hausdorff \cite[Theorem V.3.1]{conw_fa} semigroup under its weak$^*$ topology. The same goes for the unit ball $\cC(\bG)^*_{\le 1}$ in the continuous dual of $\cC(\bG)$: the norm dual to $\underline{\otimes}$ being \emph{cross} \cite[Proposition T.3.11]{wo},
\begin{equation*}
  \|\varphi\|,\ \|\psi\| \le 1
  \xRightarrow{\quad}
  \|\varphi\cdot\psi:=(\varphi\otimes \psi)\circ\Delta\|\le 1.
\end{equation*}
The compact semigroups
\begin{equation*}
  \cS:=\cS_{\bG}:=\cS(\cC(\bG))
  \quad\text{and}\quad
  \cK:=\cK_{\bG}:=\cC(\bG)^*_{\le 1}
\end{equation*}
will feature again below.

\begin{lemma}\label{le:clsc.pt.iff}
  For a compact quantum group with underlying function algebra $\cC(\bG)$ the following conditions are equivalent.
  \begin{enumerate}[(a)]
  \item\label{item:le:clsc.pt.iff:cls.pt} $\bG$ has a classical point, in the sense that $\cC(\bG)$ has a multiplicative state.
  \item\label{item:le:clsc.pt.iff:counit} The counit of the Hopf $*$-algebra $\cO(\bG)$ extends continuously to $\cC(\bG)$.
  \item\label{item:le:clsc.pt.iff:conv.inv} One of the semigroups $\cS_{\bG}$, $\cK_{\bG}$ or $\cC(\bG)^*$ has an invertible element.
  \item\label{item:le:clsc.pt.iff:bij} One of the semigroups $\cS_{\bG}$ or $\cK_{\bG}$ has an element $\psi$ with either multiplication map $\psi\cdot $ or $\cdot\psi$ bijective.
  \item\label{item:le:clsc.pt.iff:surj} One of the semigroups $\cS_{\bG}$ or $\cK_{\bG}$ has an element $\psi$ with either multiplication map $\psi\cdot $ or $\cdot\psi$ surjective.
  \end{enumerate}
\end{lemma}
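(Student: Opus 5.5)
I will prove the cycle (a) $\Rightarrow$ (b) $\Rightarrow$ (c) $\Rightarrow$ (d) $\Rightarrow$ (e) $\Rightarrow$ (a), with (a) $\Leftrightarrow$ (b) handled first and separately.

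\textbf{(a) $\Leftrightarrow$ (b).} A multiplicative state $\chi$ on $\cC(\bG)$ restricts to a character of the Hopf $*$-algebra $\cO(\bG)$, and the characters form a group under convolution, with inverse $\chi\circ S$. Convolution of characters is again a character; on $\cO(\bG)$ it is the restriction of the convolution of the extended states, which is a multiplicative state because $\Delta$ is a $*$-morphism. The element $\varepsilon=\chi\cdot(\chi\circ S)$ is not yet known to be bounded, since $S$ need not be bounded. To get around this, I would view $\chi$ as a point of the compact semigroup $\cS_\bG$ and take $\varepsilon$ to be the idempotent in the closed subsemigroup generated by $\chi$ (Ellis--Numakura). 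That subsemigroup consists of multiplicative states, so the idempotent is a multiplicative state $e$. On each finite-dimensional block, $e$ restricts to $e(u^\alpha)$, an idempotent matrix. It lies in the closure of the powers $\chi(u^\alpha)^n$ of a unitary matrix, and the closure of the powers of a unitary is a compact group, so the only idempotent there is $1$. Hence $e=\varepsilon$ on $\cO(\bG)$, giving (b). Conversely, if $\varepsilon$ extends continuously, it is multiplicative and $*$-preserving on a dense subalgebra, hence is a character of norm $\le 1$ with $\varepsilon(1)=1$, i.e. a multiplicative state.

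\textbf{(b) $\Rightarrow$ (c) $\Rightarrow$ (d) $\Rightarrow$ (e).} Given (b), the extension $\varepsilon$ is a unit of $\cS_\bG$, hence invertible, which is (c). For (c) $\Rightarrow$ (d), an invertible $\psi$ in $\cC(\bG)^*$ makes $\psi\cdot$ bijective on $\cC(\bG)^*$. The difficulty is that (d) is phrased inside $\cS_\bG$ or $\cK_\bG$. The cleanest route is to show first that invertibility anywhere forces the existence of a unit, and then take $\psi=\varepsilon$. Namely, from $\psi\phi=\phi\psi=e$ with $e$ the unit, the restrictions to each block $\alpha$ are the matrices $\psi(u^\alpha),\phi(u^\alpha)$, and convolution is matrix multiplication. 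So the question reduces to the existence of a continuous unit. I would argue that an invertible $\psi$ yields $e:=\psi\psi^{-1}$ as a continuous functional agreeing with $\varepsilon$ on $\cO(\bG)$: for $x\in\cO(\bG)$ we have $(\psi\otimes\psi^{-1})\Delta x=\varepsilon(x)$, because the identity element of the semigroup must act as $\varepsilon$ on each matrix block. Thus (c) $\Rightarrow$ (b) directly. With a unit in hand, (d) holds with $\psi=\varepsilon$, and (d) $\Rightarrow$ (e) is trivial.

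\textbf{(e) $\Rightarrow$ (a)/(b).} This is the main obstacle. Suppose $\psi\cdot\colon\cS\to\cS$ is surjective. Restricting to a block gives surjectivity of $M\mapsto \psi(u^\alpha)M$ onto the set $\{\phi(u^\alpha):\phi\in\cS\}$, which contains the identity matrix only if $\varepsilon$ is there. The plan is instead to choose $\phi$ with $\psi\phi=\psi$, via a compactness argument. Surjectivity gives a $\phi_1$ with $\psi\phi_1=\psi$. Applied to the set $\{\phi:\psi\phi=\psi\}$, which is a closed nonempty subsemigroup, Ellis--Numakura produces an idempotent $e$ with $\psi e=\psi$.

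The remaining step is to show that $e$ is the counit. I would use the block picture: $e(u^\alpha)$ is an idempotent contraction matrix. Surjectivity onto all states, including the Haar state composed appropriately, should force $e(u^\alpha)$ to have full rank. For instance, every $\phi\in\cS$ has the form $\psi\phi'$, so the matrices $\phi(u^\alpha)$ all lie in the range of $\psi(u^\alpha)$. This forces $\psi(u^\alpha)$ to be invertible, and then $\psi e=\psi$ gives $e(u^\alpha)=1$. Hence $e$ is a continuous extension of $\varepsilon$. The delicate point is showing that the states' block matrices span the full matrix algebra, so that the range of $\psi(u^\alpha)$ is everything; I expect to get this from linear independence of the $u^\alpha_{ij}$ together with the fact that states span $\cC(\bG)^*$.
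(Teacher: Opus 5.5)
Your proposal is correct. For the substantive implication (e)~$\Rightarrow$~(b), however, it takes a genuinely different route from the paper.

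\textbf{The paper's route.} It argues by compact-semigroup dynamics. It extracts a convergent subnet of the powers $\psi^{n}$ in $\cK_{\bG}$ and uses joint continuity of convolution to make the translations $\psi^{n_{\lambda}}\cdot$ uniformly close. Surjectivity ($\psi^{n_{\lambda}}\cK_{\bG}=\cK_{\bG}$) then makes $\psi^{n_{\lambda'}-n_{\lambda}}\cdot$ uniformly close to the identity, which yields a one-sided unit in the limit. A separate claim, proved with the truncated counits $\varepsilon_{\cF}$, identifies any such unit with $\varepsilon$.

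\textbf{Your route.} You use that convolution is blockwise matrix multiplication, $(\varphi\cdot\phi)(u^{\alpha})=\varphi(u^{\alpha})\phi(u^{\alpha})$. Linear independence of the $u^{\alpha}_{ij}$, Hahn--Banach, and the fact that states span $\cC(\bG)^*$ show that the matrices $\phi(u^{\alpha})$, for $\phi\in\cS_{\bG}$ (or $\cK_{\bG}$), span $M_{d_{\alpha}}$. Surjectivity of $\psi\cdot$ then forces $\psi(u^{\alpha})M_{d_{\alpha}}=M_{d_{\alpha}}$, i.e.\ every $\psi(u^{\alpha})$ is invertible.

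\textbf{What each buys.} Your argument is purely algebraic: it needs no compactness, nets or uniform-structure bookkeeping. It also gives a sharper conclusion. Since $\psi$ lies in the semigroup, some $\phi_1$ satisfies $\psi\phi_1=\psi$, and this $\phi_1$ already has $\phi_1(u^{\alpha})=1$ for all $\alpha$. So $\phi_1$ is the continuous extension of $\varepsilon$, and your Ellis--Numakura step producing the idempotent $e$ is superfluous. The paper's argument is instead the general fact that a surjective translation in a compact topological semigroup yields a one-sided identity. The Hopf-algebraic structure enters there only at the final identification.

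\textbf{Points to tidy.}
\begin{itemize}
\item The case $\cdot\psi$ is the mirror image (row spaces instead of column spaces, with $\phi_1\psi=\psi$) and should be mentioned.
\item In (c), your assertion that a unit acts as $\varepsilon$ on each block is exactly the paper's claim. Justify it by the same spanning argument: by linearity the unit fixes every functional, so take $\phi$ with $\phi(u^{\alpha})=1$.
\item For (a)~$\Rightarrow$~(b), which the paper simply cites, your self-contained argument is valid but can be shortened. The idempotent $e$ is itself a character, so each $e(u^{\alpha})$ is a unitary idempotent, hence equal to $1$. There is no need to examine closures of powers of $\chi(u^{\alpha})$.
\end{itemize}
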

\begin{proof}
  We have
  \begin{equation*}
    \text{\Cref{item:le:clsc.pt.iff:cls.pt}}
    \xLeftrightarrow{\text{\cite[pre Theorem 2.2]{zbMATH07377294}}}
    \text{\Cref{item:le:clsc.pt.iff:counit}}
    \xRightarrow{\text{plainly}}
    \text{\Cref{item:le:clsc.pt.iff:conv.inv}}_\bullet
    ,\quad
    \bullet\in\left\{\cS_{\bG},\ \cK_{\bG},\ \cC(\bG)^*\right\}
  \end{equation*}
  and
  \begin{equation*}
    \text{\Cref{item:le:clsc.pt.iff:conv.inv}}_\bullet
    \xRightarrow{\quad}
    \text{\Cref{item:le:clsc.pt.iff:bij}}_\bullet
    \xRightarrow{\quad}
    \text{\Cref{item:le:clsc.pt.iff:surj}}_\bullet
    ,\quad
    \bullet\in \left\{\cS_{\bG},\ \cK_{\bG}\right\},
  \end{equation*}
  also with $\text{\Cref{item:le:clsc.pt.iff:conv.inv}}_\bullet$ progressively weaker as the decorating semigroup $\bullet$ increases. It thus remains to argue that either one of \Cref{item:le:clsc.pt.iff:conv.inv}$_{\cC(\bG)^*}$ or \Cref{item:le:clsc.pt.iff:surj}$_{\bullet}$ implies \Cref{item:le:clsc.pt.iff:counit}. 

  \begin{enumerate}[label={},wide]
  \item\textbf{Claim: the unit of any of the semigroups $\cS$, $\cK$ or $\cC(\bG)^*$, if it exists, must be the counit.} That the variants of the claim are mutually equivalent follows from the fact that arbitrary continuous functionals are expressible as linear combinations of at most four states (e.g. by \cite[Theorem III.4.2(ii)]{tak1}). Having thus reduced the problem to its $\cC(\bG)^*$ variant, write $\psi$ for the assumed unit of that convolution monoid and
    \begin{equation*}
      \varepsilon_{\cF}:=
      \begin{cases}
        \varepsilon&\text{ on }\spn\left\{u^{\alpha}_{ij}\ :\ \alpha\in \cF\right\}\\
        0&\text{ on }\spn\left\{u^{\beta}_{ij}\ :\ \beta\not\in \cF\right\}
      \end{cases}
      ,\quad
      \text{finite }\cF\subseteq \Irr(\bG). 
    \end{equation*}
    The $\varepsilon_{\cF}$ being restrictions to $\cO(\bG)$ of continuous functionals (i.e. members of $\cC(\bG)^*$) by \cite[(5.37) and surrounding discussion]{wor},
    \begin{equation*}
      \psi
      =
      \lim_{\cF\nearrow}\psi\cdot \varepsilon_{\cF}
      =
      \lim_{\cF\nearrow}\varepsilon_{\cF}
      =
      \varepsilon
    \end{equation*}
    in the weak$^*$ topology on the full linear dual $\cO(\bG)^*$.

  \item\textbf{\Cref{item:le:clsc.pt.iff:conv.inv}$_{\cC(\bG)^*}$  or \Cref{item:le:clsc.pt.iff:surj}$_{\bullet}$ $\Rightarrow$ \Cref{item:le:clsc.pt.iff:counit}:} The \Cref{item:le:clsc.pt.iff:conv.inv} branch is immediate given the claim, so we focus on \Cref{item:le:clsc.pt.iff:surj}$_{\cK}$ to fix ideas. Being valued in a compact Hausdorff space, the sequence $\left(\psi^{n}\right)$ will have \cite[Theorem 17.4]{wil_top} a convergent (and hence also \emph{Cauchy} \cite[Definition 39.1]{wil_top}) \emph{subnet} $\left(\psi^{n_{\lambda}}\right)_{\lambda}$. As $\psi^{{n_{\lambda,\lambda'}}}$ are arbitrarily close for large $\lambda,\lambda'$, so will the respective convolution-multiplication maps
    \begin{equation*}
      \psi^{{n_{\lambda,\lambda'}}}\cdot
      \in
      \left(\cat{Cont}\left(\cK\to \cK\right),\ \text{uniform topology}\right)
    \end{equation*}
    for $\cK$: this follows from the identification \cite[Proposition 7.1.5]{brcx_hndbk-2}
    \begin{equation*}
      \cat{Cont}\left(\cK\times \cK\to \cK\right)
      \cong
      \cat{Cont}\left(\cK\to \cat{Cont}\left(\cK\to \cK\right)\right).
    \end{equation*}
    Consequently (``$\sim$'' meaning ``close''):
    \begin{equation*}
      \psi^{n_{\lambda'}}\cdot
      \sim
      \psi^{n_{\lambda}}\cdot
      \text{ on }
      \cK
      \xRightarrow{\quad}
      \psi^{\left(n_{\lambda'}-n_{\lambda}\right)}\cdot
      \sim
      \id
      \text{ on }
      \psi^{n_{\lambda}}
      \cK
      \xlongequal[\quad]{\text{surjectivity}}
      \cK.
    \end{equation*}
    The net $\left(\psi^{\left(n_{\lambda'}-n_{\lambda}\right)}\right)_{\lambda'>\lambda}$ will thus have a subnet converging to the identity, which must be a counit for $\cC(\bG)$ by the claim preceding the current portion of the argument.    
  \end{enumerate}
\end{proof}

\Cref{th:if.lg.uaij} leaves unaddressed the now-inevitable question of whether the decay condition can be removed entirely. Due to (purely quantum) peculiarities, it turns out it cannot. To frame the result formally as \Cref{th:rd.exp.dim} below, we need to recollect some language attendant to quantum-group growth/decay considerations.
\begin{itemize}[wide]
\item A \emph{finitely generated (f.g.)} discrete quantum group is the Pontryagin dual $\Gamma:=\widehat{\bG}$ for compact quantum $\bG$ whose underlying Hopf $*$-algebra $\cO(\bG)$ is f.g. as an algebra.
  
\item The \emph{word length function} \cite[Example 3.2(ii)]{MR2329000} $\ell$ on $\Gamma$ above with respect to a finite generating set $\cF\subseteq \Irr(\bG)$ (typically assumed symmetric, i.e. with the dual $\alpha^*\in \cF$ whenever $\alpha\in \cF$) is
  \begin{equation*}
    \Irr(\bG)
    \ni \alpha
    \xmapsto{\quad\ell=\ell_{\cF}\quad}
    \min\left\{k\in \bZ_{\ge 0}\ :\ \exists\left(\alpha_i\in \cF,\ 1\le i\le k\right)\left(\alpha\le \alpha_1\otimes\cdots\otimes\alpha_k\right)\right\}. 
  \end{equation*}
  We extend the notation $\ell(-)$ to elements of $\cO(\bG)$: one such has length $k$ by definition if it is a linear combination of matrix coefficients $u^{\alpha}_{ij}$ with $\ell(\alpha)=k$. 
\item $\Gamma$ has \emph{the property of rapid decay (RD)} \cite[Proposition and Definition 3.5]{MR2329000} (with respect to $\ell_{\cF}$) if
  \begin{equation*}
    \left(\forall x\in \cO(\bG)\right)
    \left(\|x\|\le P(\ell(x))\|x\|_2\right)
    ,\quad
    \|-\|_2:=h\left(-^*\cdot -\right)^{1/2}
  \end{equation*}
  for some polynomial $P$.

\item The terminology applies here (somewhat unconventionally) also ``locally'' or relatively, to families of representations: given any symmetric finite $\cF\subseteq \Irr(\bG)$ one might consider sequences $\alpha_n$ exhibiting rapid decay with respect to $\ell:=\ell_{\cF}$ in a sense the reader will have no difficulties extrapolating from the global version just sketched.
\end{itemize}

\Cref{th:rd.exp.dim} shows (e.g. via \Cref{ex:oplus.uplus}) that there do exist uniform$_{\le 1}$ representations with infinite spectrum; an auxiliary remark precedes it.

For $v\in H$ we write
\begin{equation}\label{eq:suppv}
  \supp v:=\left\{\alpha\in \Irr(\bG)\ :\ P_U^{\alpha}v\ne 0\right\}
\end{equation}
for the \emph{support} of $v$ with respect to the representation $U$, and $\supp v\to\infty$ for varying $v$ means that the support eventually misses every finite subset of $\Irr(\bG)$.

\begin{lemma}\label{le:u.rleg}
  A unitary representation $U\in M(\cC(\bG)\underline{\otimes}\cK(H))$ of a compact quantum group $\bG$ is uniform$_{\le 1}$ as a Banach-space representation provided
  \begin{equation*}
    \left(\supp v\to\infty \wedge \supp w\to\infty\right)
    \ 
    \xRightarrow{\quad}
    \ 
    (\id\otimes\omega_{v,w})U
    \xrightarrow[\quad\quad]{\quad\text{norm}\quad}
    0
  \end{equation*}
  for $(v,w)\in \left(H_{\le 1}\right)^2$ and $\omega_{v,w}:=\Braket{v\mid\bullet w}$.
\end{lemma}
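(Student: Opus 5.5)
The plan is to verify the right-handed version of condition \Cref{item:pr:cast2nm.sym.legs:unif.vect0.l} of \Cref{pr:cast2nm.sym.legs}. By \Cref{le:u.lleg} and that proposition, this suffices for uniformity$_{\le 1}$. Concretely, I will show that for any net $(v_\lambda,w_\lambda)_\lambda\subset (H_{\le 1})^2$ with $(\omega_{v_\lambda,w_\lambda})_\lambda$ weak$^*$-Cauchy in $\cL(H)_*$, the net $\left((\id\otimes\omega_{v_\lambda,w_\lambda})U\right)_\lambda$ is norm-Cauchy in $\cC(\bG)$. The device is to split off a finite piece of the spectrum. For finite $\cF\subseteq\Irr(\bG)$ set $P_{\cF}:=\sum_{\alpha\in\cF}P^{\alpha}$ and $P_{\cF}^{\perp}:=1-P_{\cF}$.

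The first step is to note that $1\otimes P_{\cF}$ commutes with $U$, because isotypic components are subrepresentations. The cross terms $(\id\otimes\omega_{P_{\cF}v,P^{\perp}_{\cF}w})U$ and $(\id\otimes\omega_{P^{\perp}_{\cF}v,P_{\cF}w})U$ therefore vanish, which gives
\begin{equation*}
  (\id\otimes\omega_{v,w})U
  =
  (\id\otimes\omega_{P_{\cF}v,P_{\cF}w})U
  +
  (\id\otimes\omega_{P^{\perp}_{\cF}v,P^{\perp}_{\cF}w})U .
\end{equation*}

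The second step, which I expect to be the main obstacle, is to upgrade the hypothesis to a \emph{uniform} tail estimate. The claim is that for every $\varepsilon>0$ there is a finite $\cF$ with
\begin{equation*}
  \left\|(\id\otimes\omega_{P^{\perp}_{\cF}v,P^{\perp}_{\cF}w})U\right\|<\varepsilon
  \quad\text{for all } v,w\in H_{\le 1}.
\end{equation*}
I will argue by contradiction. If the claim fails for some $\varepsilon$, then for every finite $\cF$ there are $v_{\cF},w_{\cF}\in H_{\le 1}$ whose tail slice has norm at least $\varepsilon$. The vectors $P^{\perp}_{\cF}v_{\cF}$ and $P^{\perp}_{\cF}w_{\cF}$ lie in $H_{\le 1}$ and have supports \Cref{eq:suppv} disjoint from $\cF$. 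Indexing by finite subsets directed by inclusion, their supports therefore tend to $\infty$. The hypothesis of the lemma then forces the corresponding slices to norm-converge to $0$, contradicting the lower bound $\varepsilon$.

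The third step handles the finite part. The compression $(1\otimes P_{\cF})U$ has finite spectrum, so, as noted in the proof of \Cref{th:fin.spec.equiv}, it lies in the algebraic tensor product. We may thus write it as a finite sum $\sum u^{\alpha}_{ij}\otimes y^{\alpha}_{ij}$ with $\alpha\in\cF$ and $y^{\alpha}_{ij}\in\cL(H)$. It follows that
\begin{equation*}
  (\id\otimes\omega_{P_{\cF}v,P_{\cF}w})U=\sum u^{\alpha}_{ij}\,\omega_{v,w}\left(P_{\cF}y^{\alpha}_{ij}P_{\cF}\right).
\end{equation*}
Along a weak$^*$-Cauchy net these finitely many scalar coefficients are Cauchy, so this finite part is norm-Cauchy.

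To conclude, fix $\varepsilon$, choose $\cF$ as in the second step, and combine it with the third step. This gives $\left\|(\id\otimes\omega_{v_\lambda,w_\lambda})U-(\id\otimes\omega_{v_{\lambda'},w_{\lambda'}})U\right\|<3\varepsilon$ for all sufficiently large $\lambda,\lambda'$. The net of slices is therefore norm-Cauchy, so \Cref{pr:cast2nm.sym.legs}\Cref{item:pr:cast2nm.sym.legs:unif.vect0.l}$_r$ holds, and hence uniformity$_{\le 1}$ holds.
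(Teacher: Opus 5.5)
Your proposal is correct and follows the paper's approach. It uses the same ingredients: truncation by the finite spectral projections $P^{\cF}$, a $\lambda$-uniform tail bound extracted from the hypothesis, and the fact that $(1\otimes P^{\cF})U$ lies in the algebraic tensor product $\cC(\bG)^{\cF}\otimes\cL(H)$, so the finite part sends weak$^*$-Cauchy nets of vector functionals to norm-Cauchy nets. The only difference is packaging: you verify condition (d)$_r$ of \Cref{pr:cast2nm.sym.legs} for $U$ itself and apply the proposition as stated, whereas the paper re-enters the proof of (d)$_r\Rightarrow$(a)$_\ell$ and truncates the norming vectors $v_\lambda,w_\lambda$ there. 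Your version, with the contradiction argument making the uniform tail bound explicit and the cross terms killed because $P^{\cF}$ intertwines $U$, is if anything the cleaner of the two.
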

\begin{proof}
  Per \Cref{le:u.lleg}, we have to verify \Cref{pr:cast2nm.sym.legs}'s condition \Cref{item:pr:cast2nm.sym.legs:unif.l}$_{\ell}$. A slightly more careful reworking of the implication \Cref{item:pr:cast2nm.sym.legs:unif.vect0.l}$_{r}$ $\Rightarrow$ \Cref{item:pr:cast2nm.sym.legs:unif.l}$_{\ell}$ (building on the proof of \cite[Proposition 2.2]{MR567834}) will deliver that outcome given the present hypotheses.

  Recall to that end that said proof begins by assuming a weak$^*$-0-convergent net $\left(\varphi_{\lambda}\right)_{\lambda}\subset \cC(\bG)^*$ with $U_{\varphi_{\lambda}}:=\left(\varphi_{\lambda}\otimes\id\right) U$ norm-bounded away from 0 and selecting corresponding
  \begin{equation*}
    \psi_{\lambda}:=\omega_{v_{\lambda},w_{\lambda}}\in \cL(H)_*
    ,\quad
    v_{\lambda},w_{\lambda}\in H_{\le 1}
  \end{equation*}
  with $\left|\psi_{\lambda} U_{\varphi_{\lambda}}\right|$ sufficiently close to $\left\|U_{\varphi_{\lambda}}\right\|$. Write
  \begin{equation*}
    P^{\cF}
    :=
    \sum_{\alpha\in \cF}P^{\alpha}
    ,\quad
    \cF\subseteq \Irr(\bG)
  \end{equation*}
  The hypothesis ensures that for finite $\cF\subseteq \Irr(\bG)$ sufficiently large,
  \begin{equation*}
    \begin{aligned}
      \left|\psi^{\cF}_{\lambda}U_{\varphi_{\lambda}}-\psi_{\lambda}U_{\varphi_{\lambda}}\right|
      &=
        \left|\varphi_{\lambda}\left(\id\otimes \psi^{\cF}_{\lambda}\right)U-\varphi_{\lambda}\left(\id\otimes \psi_{\lambda}\right)U\right|\\
      \psi^{\cF}_{\lambda}
      &\in
        \left\{
        \omega_{P^{\cF}v_{\lambda},w_{\lambda}}
        ,\
        \omega_{v_{\lambda},P^{\cF}w_{\lambda}}
        ,\
        \omega_{P^{\cF}v_{\lambda},P^{\cF}w_{\lambda}}
        \right\}
    \end{aligned}
  \end{equation*}
  is $\lambda$-uniformly small, so we may assume all $v_{\lambda}$ and $w_{\lambda}$ finitely supported on a single $\cF\subseteq \Irr(\bG)$. The truncation $\left(1\otimes P^{\cF}\right)U$ belonging to the algebraic tensor product
  \begin{equation*}
    \cC(\bG)^{\cF}\otimes \cL(H)
    ,\quad
    \cC(\bG):=\bigoplus_{\substack{\alpha\in \cF\\1\le i,j\le d_{\alpha}}}
    \bC u^{\alpha}_{ij},
  \end{equation*}
  the weak$^*$-norm Cauchy preservation assumed in the proof of \Cref{pr:cast2nm.sym.legs}'s \Cref{item:pr:cast2nm.sym.legs:unif.vect0.l}$_{r}$ $\Rightarrow$ \Cref{item:pr:cast2nm.sym.legs:unif.l}$_{\ell}$ implication holds for our $\psi_{\lambda}$ and the argument goes through.
\end{proof}

\begin{theorem}\label{th:rd.exp.dim}
  Let $\bG$ be a compact quantum group, $\cF\subseteq \Irr(\bG)$ a finite family and $\left(\alpha_n\right)_n\subset \Irr(\bG)$ with property RD with respect to $\ell_{\cF}$.

  If $\dim\alpha_n$ increase exponentially while the lengths $\ell\left(\alpha_n\right)$ increase polynomially, the unitary representation $\bigoplus_n \alpha_n$ is uniform$_{\le 1}$.
\end{theorem}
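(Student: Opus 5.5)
We will verify the criterion of \Cref{le:u.rleg}. For $U=\bigoplus_n U^{\alpha_n}$ on $H=\bigoplus_n V_{\alpha_n}$, the spectral projections $P^{\alpha}$ are the projections onto the summands $V_{\alpha_n}$ (grouping repeated $\alpha_n$ if necessary; we may assume the $\alpha_n$ are distinct). For $v,w\in H_{\le 1}$ we decompose $v=\sum_n v_n$ and $w=\sum_n w_n$ with $v_n,w_n\in V_{\alpha_n}$. Then
\begin{equation*}
  (\id\otimes\omega_{v,w})U=\sum_n \Braket{v_n\mid u^{\alpha_n} w_n}
  ,\quad
  \Braket{v_n\mid u^{\alpha_n}w_n}=\sum_{i,j}u^{\alpha_n}_{ij}\overline{v_{n,i}}w_{n,j}.
\end{equation*}
The plan is to bound the norm of this series summand by summand and show that the resulting bound tends to zero as $\supp v,\supp w\to\infty$, that is, as the indices $n$ carrying weight escape every finite set.

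\textbf{Step 1: $L^2$ estimate.} By the orthogonality relations \cite[(5.14)]{wor} with diagonal $F_{\alpha}$,
\begin{equation*}
  \left\|\Braket{v_n\mid u^{\alpha_n}w_n}\right\|_2^2
  =\sum_{i,j}|v_{n,i}|^2|w_{n,j}|^2\frac{F_{\alpha_n jj}}{q_{\alpha_n}}
  \le \frac{\|F_{\alpha_n}\|}{q_{\alpha_n}}\|v_n\|^2\|w_n\|^2 .
\end{equation*}
Here $q_{\alpha}\ge\dim\alpha$, so in the Kac case this quantity is at most $\|v_n\|^2\|w_n\|^2/\dim\alpha_n$. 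In general one must control $\|F_{\alpha_n}\|/q_{\alpha_n}$. We will read the exponential growth hypothesis as exponential growth of $q_{\alpha_n}$ relative to $\|F_{\alpha_n}\|$, or simply restrict to the case $F=1$. This is the first delicate point, and it may require a slightly different choice of basis or of inner product in \Cref{eq:inn.prds}.

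\textbf{Step 2: RD.} Each $\Braket{v_n\mid u^{\alpha_n}w_n}$ has length $\ell(\alpha_n)$, so property RD for the family gives
\begin{equation*}
  \left\|\Braket{v_n\mid u^{\alpha_n}w_n}\right\|
  \le P(\ell(\alpha_n))\,(\dim\alpha_n)^{-1/2}\|v_n\|\|w_n\| .
\end{equation*}
Since $\ell(\alpha_n)$ grows polynomially in $n$ while $\dim\alpha_n\ge c\,r^n$ for some $r>1$, the coefficients
\begin{equation*}
  c_n:=P(\ell(\alpha_n))(\dim\alpha_n)^{-1/2}
\end{equation*}
are summable, and indeed decay exponentially.

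\textbf{Step 3: conclusion.} By the triangle inequality and $\|v_n\|,\|w_n\|\le1$,
\begin{equation*}
  \left\|(\id\otimes\omega_{v,w})U\right\|\le\sum_{n\in\supp v\cap\supp w}c_n
  \le\sum_{n\ge N}c_n ,
\end{equation*}
where $N$ is the smallest index in the support. As $\supp v\to\infty$, $N\to\infty$, so the tail sum tends to zero and \Cref{le:u.rleg} yields uniformity$_{\le1}$. Note that this estimate would suffice even if only one of the supports went to infinity.

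The main obstacle is the interplay between the given hypotheses and the non-Kac weights $F_{\alpha}$ in Step 1, namely making precise how the stated exponential growth of $\dim\alpha_n$ delivers the decay of $\|\cdot\|_2$ at rate $(\dim\alpha_n)^{-1/2}$. We will handle this first, either by using the $\ell$/$r$ inner products symmetrically or by noting $\|F_\alpha\|\le q_\alpha$ together with an averaging over $j$ that exploits both the $F$ and $F^{-1}$ normalizations of \Cref{eq:xaij}. A secondary point is checking that the local RD inequality applies uniformly to the elements $\Braket{v\mid u^{\alpha}w}$, which are of pure length $\ell(\alpha)$.
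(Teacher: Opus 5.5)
Your Steps 2 and 3 are fine in the Kac case. They are in fact tidier than the paper's write-up: you treat general $\omega_{v,w}$, and you apply RD only to the pure-length pieces $\Braket{v_n\mid u^{\alpha_n}w_n}$ before summing the exponentially decaying $c_n$. The gap is the one you flag in Step 1, and your proposal does not close it. Reading the growth hypothesis as growth of $q_{\alpha_n}$ relative to $\|F_{\alpha_n}\|$, or restricting to $F=1$, changes the theorem, and the other remedies you sketch do not supply the needed decay.

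Here is why. \Cref{le:u.rleg} needs a bound uniform over unit $v,w$, and the norm in RD is $\|x\|_2=h(x^*x)^{1/2}$. So the relevant weights are $F^{-1}_{\alpha ii}/q_\alpha$, as in \Cref{eq:xaij}; the $F_{\alpha jj}/q_\alpha$ you wrote belong to $h(xx^*)$. Taking $v=e_i$ at the largest entry of $F^{-1}_{\alpha_n}$ gives $\sup_{\|v\|,\|w\|\le 1}\|\Braket{v\mid u^{\alpha_n}w}\|_2^2=\|F^{-1}_{\alpha_n}\|/q_{\alpha_n}$. The general inequality $\|F^{\pm1}_\alpha\|\le \Tr F^{\pm1}_\alpha=q_\alpha$ only bounds this by $1$. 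Averaging over $j$ is unavailable, because the estimate must hold for every pair of unit vectors. The two expressions in \Cref{eq:xaij} are two formulas for the same Fourier coefficient, not two independent bounds. Switching to $h(xx^*)$ would require RD for $x^*$, i.e. for the conjugate family, which is not assumed. Nothing in Step 1 turns growth of $\dim\alpha_n$ into decay of $\|F^{-1}_{\alpha_n}\|/q_{\alpha_n}$.

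The missing idea is that the RD hypothesis itself forces unimodularity, so there is no non-Kac case to handle. The paper's proof opens with exactly this. It invokes \cite[Proposition 4.7]{MR2329000} (property RD implies Kac type) and asserts that the argument there, via Lemmas 4.3 and 4.6, localizes to the family $(\alpha_n)$. This gives $F_{\alpha_n}=1$, hence $q_{\alpha_n}=\dim\alpha_n$ and $\|u^{\alpha_n}_{ij}\|_2=(\dim\alpha_n)^{-1/2}$. With that in hand your Steps 2 and 3 go through verbatim. Without it, your argument proves the statement only for Kac-type $\bG$, which does cover the examples in \Cref{ex:oplus.uplus} but not the theorem as stated. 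If you adopt the paper's route, note that this unimodularity step, especially its localization to a single sequence $(\alpha_n)$, is the substantive input; the $L^2$ computation is routine once it is in place.
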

\begin{proof}
  We will apply \Cref{le:u.rleg}, proving that its hypothesis obtains under the present circumstances.

  Observe first that the individual positive operators $F_{\alpha_n}$ of \Cref{eq:falpha} are identities: \cite[Proposition 4.7]{MR2329000} proves this globally, assuming the quantum group as a whole has property RD, but the argument (relying on \cite[Lemmas 4.3 and 4.6]{MR2329000}) goes through in the local version needed here.
  
  Writing $H:=\bigoplus_n V_{\alpha_n}$ for the carrier Hilbert space of the direct-sum representation $U$ in question, we have to argue that for $v\in H$ with
  \begin{equation}\label{eq:suppv.away}
    \|v\|=1
    \ \wedge \ 
    \supp v\subseteq \left\{\alpha_m\ :\ m\gg 0\right\}
  \end{equation}
  ($\supp$ defined as in \Cref{eq:suppv}) the norm $\left\|(\id\otimes\omega_{v,v})U\right\|$ will be as small as desired provided only that $m\gg 0$ is indeed sufficiently large. The unimodularity condition $F_{\alpha}=1$ ensures \cite[(5.14), (5.15)]{wor} that the $u^{\alpha_n}_{ij}$ (are mutually orthogonal and) have $\|-\|_2$ norms $\frac 1{\sqrt{\dim\alpha_n}}$, hence the norm estimate
  \begin{equation*}
    \begin{aligned}
      \text{\Cref{eq:suppv.away}}
      \ 
      &\xRightarrow{\quad}
        \ 
        \exists\left(c>1\right)
        \left(
        \left\|(\id\otimes\omega_{v,v})U\right\|_2
        \le \max_{m:\alpha_m\in\supp v}\frac 1{\sqrt{\dim\alpha_m}}
        \le \frac 1{c^{m_0}}
        \right)\\
      &\xRightarrow{\quad\text{RD}\quad}
        \exists\left(\text{polynomial }P\right)
        \left(
        \left\|(\id\otimes\omega_{v,v})U\right\|
        \le
        \frac{P(m_0)}{c^{m_0}}
        \right)\\
      &\left(\text{with }m_0:=\min\left\{m\ :\ \alpha_m\in \supp v\right\}\right)
    \end{aligned}    
  \end{equation*}
  confirming the desired conclusion.
\end{proof}

\begin{example}\label{ex:oplus.uplus}
  Examples of compact quantum groups verifying the hypotheses of \Cref{th:rd.exp.dim} are well known and have been much studied (from various perspectives, RD-related or not): both
  \begin{itemize}[wide]
  \item the \emph{free orthogonal group} $\bG:=\bO^+(n)$, $n\ge 3$ (underlying function algebra $\cC(\bG)=A_o(I_n)$ in \cite[\S 4.3]{MR2329000}), with $\cO(\bG)$ freely generated by a unitary $n\times n$ matrix $u^{\alpha}=\left(u^{\alpha}_{ij}\right)$, $1\le i,j\le n$ of self-adjoint elements;

  \item and the \emph{free unitary group} $\bG:=\bU^+(n)$, $n\ge 3$, with $\cO(\bG)$ freely generated by unitary $u^{\alpha}\in \cO(\bG)\otimes M_n$ such that $\overline{u}^{\alpha}:=\left(u^{\alpha *}_{ij}\right)_{i,j}$ is also unitary
  \end{itemize}
  will do by \cite[Theorems 4.9 and 4.10]{MR2329000} respectively.
\end{example}

%%%%%%%%%%%%%%%%%%%%%%%%%%%%%%%%
%%%%%%%%%%%%%%%%%%%%%%%%%%%%%%%%

\addcontentsline{toc}{section}{References}
%\bibliography{bib}{}
%\bibliographystyle{plain}

% BEGIN INSERTED BBL (spectrum-compactness-q-xv3.bbl)

% END INSERTED BBL

\Addresses

\end{document}